\theoremstyle{definition}
\newtheorem{thm}{Theorem}[section]
\newtheorem{prop}[thm]{Proposition}
\newtheorem{defn}[thm]{Definition}
\newtheorem{exam}[thm]{Example}
\newtheorem{rmk}[thm]{Remark}
\theoremstyle{definition} 
\newenvironment{talign*}
{\let\displaystyle\textstyle\csname align*\endcsname}
{\endalign}
\def\sl{\textrm{SL}(2,\mathbb{C})}
\def\psl{\textrm{PSL}(2,\mathbb{C})}
\def\Zbb{\mathbb{Z}}
\def\Hb{{\overline{\mathbb{H}^3}}}
\def\phT{{\phi \hspace{-0.55em} \widetilde{\raisebox{-0.1em}{\phantom{T}} }}}
\def\phVT{{\phi_V \hspace{-1.2em} \widetilde{\raisebox{-0.1em}{\phantom{T}} }} \hspace{0.40em}}
\begin{document}
	
	\begin{frontmatter}
	\title{The volume and Chern-Simons invariant\\ of a Dehn-filled manifold}
	\author{Seokbeom Yoon}
	\address{Department of Mathematical Sciences, Seoul National University}
	\ead{sbyoon15@snu.ac.kr}
	\begin{abstract} For a compact 3-manifold $N$ with non-empty boundary, 
		 Zickert \cite{zickert2009volume} gave a combinatorial formula for computing the volume and Chern-Simons invariant of a boundary parabolic representation $\pi_1(N)\rightarrow \mathrm{PSL}(2,\mathbb{C})$. In this paper, we introduce a notion of deformed Ptolemy varieties and extend the formula of Zickert to a representation that is not necessarily boundary parabolic.
		This allows us to compute the volume and Chern-Simons invariant of a $\mathrm{PSL}(2,\mathbb{C})$-representation of a closed 3-manifold.	\end{abstract}
	\begin{keyword} Deformed Ptolemy variety, flattening, volume, Chern-Simons invariant.
		\MSC[2010] 57M25\sep  	57M27.
	\end{keyword}

\end{frontmatter}


\section{Introduction} 
	For a complete hyperbolic 3-manifold $N$ of finite volume,
	the \emph{complex volume} of $N$ is given by \[ \textrm{Vol}_\mathbb{C}(N)= \textrm{Vol}(N)+ i\mkern 1mu \textrm{CS}(N) \in \Cbb/i \pi^2  \Zbb\]
	where Vol and CS denote the volume and Chern-Simons invariant, respectively \cite{dupont1987dilogarithm,neumann1985volumes}.
	More generally, one can define the complex volume $\textrm{Vol}_\Cbb(\rho)$ for any boundary parabolic $\psl$-representation $\rho$ of a compact $3$-manifold; in particular, for any $\psl$-representation $\rho$ of a closed $3$-manifold. We refer to \cite{garoufalidis2015complex} for details.
	Recall that a representation $\rho : \pi_1(N)\rightarrow \psl$ of a compact $3$-manifold $N$ is called \emph{boundary parabolic} if $\rho(\gamma)$ is parabolic, i.e. $\textrm{tr}(\rho(\gamma)) = \pm 2$, for every loop $\gamma$ (up to base point) in $\partial N$.
	
	\subsection{Neumann's formula}
	Let $N$ be a compact 3-manifold with non-empty boundary and $\Tcal$ be an ideal triangulation of the interior of $N$ consisted of ideal tetrahedra $\Delta_1,\cdots,\Delta_n$.
	Recall that an ideal tetrahedron $\Delta$ with mutually distinct vertices $v_0,v_1,v_2,v_3 \in \partial \Hb$ is determined up to isometry by the cross-ratio (or the shape parameter)
	\[z=[v_0:v_1:v_2:v_3]=\frac{(v_0-v_3)(v_1-v_2)}{(v_0-v_2)(v_1-v_3)} \in \Cbb \setminus \{0,1\}\] where the cross-ratio parameter at each edge of $\Delta$ is given by one of $z, z':=\frac{1}{1-z}$, and $z'':=1-\frac{1}{z}$. See Figure~\ref{fig:logpara}~(left).
	
	When a collection of the cross-ratios $z_j$ of $\Delta_j$ satisfies the gluing equations for $\Tcal$, the condition that the product of the cross-ratios around each edge of $\Tcal$ equals to $1$, we obtain a representation $\rho : \pi_1(N)\rightarrow \psl$ (up to conjugation) as a holonomy. If it also satisfies the completeness condition, that is the condition that the holonomy action of every curve in $\partial N$ is a parallel translation, then $\rho$ is boundary parabolic. See, for instance, \cite{thurston1979geometry,neumann1992combinatorics}.
	It is known that the volume of such $\rho$ can be computed from the cross-ratios using the dilogarithm $\textrm{Li}_2$. Precisely, we have 
	\[\textrm{Vol}(\rho)=\sum_{j=1}^n D(z_j)\]
	where the Bloch-Wigner function $D$ is given by $D(z):=\textrm{Im}(\textrm{Li}_2(z))+\textrm{arg}(1-z) \log|z|$.
	
	Neumann \cite{neumann2004extended} (see also \cite{dupont1987dilogarithm}) showed that computing the complex volume can be achieved by considering additional two integers for each ideal tetrahedron which play a role to adjust branches of logarithm functions as follows.
		
		\begin{defn}[\cite{neumann2004extended}] A \emph{flattening} of an ideal tetrahedron with the cross-ratio $z$ is a triple $\alpha= (\alpha^0,\alpha^1,\alpha^2)\in \Cbb^3$ of the form
			\begin{equation*}
			\left\{
			\begin{array}{rcl}
			\alpha^0 &=&\textrm{log}\,z+p\pi i\\[1pt]
			\alpha^1 &=&-\textrm{log}\,(1-z)+q\pi i\\[1pt]
			\alpha^2 &=&-\textrm{log}\,z+\textrm{log}\,(1-z)-(p+q)\pi i
			\end{array}
			\right.
			\end{equation*} for some $p, q \in \Zbb$. One may alternatively define a flattening by a triple $\alpha=(\alpha^0,\alpha^1,\alpha^2)\in \Cbb^3$ satisfying  $\alpha^0+\alpha^1+\alpha^2=0$ and 
			$\alpha^0 \equiv \textrm{log}\, z,\ \alpha^1 \equiv \textrm{log}\, z'$, $ \alpha^2 \equiv \textrm{log} \, z''$ in modulo $\pi i$.
		\end{defn}
		\begin{thm}[\cite{neumann2004extended}] \label{thm:neuman} Suppose the interior of a compact 3-manifold $N$ decomposes into $n$ ideal tetrahedra, say $\Delta_1,\cdots, \Delta_n$. Then for any collection of flattenings $\alpha_j$ of $\Delta_j$ satisfying (i) \emph{parity condition}; (ii) \emph{edge conditon}; (iii) \emph{cusp condition}, we have
		\begin{equation*} 
			i \textrm{Vol}_\Cbb (\rho) \equiv \sum_{j=1}^n R(\alpha_j)\quad \textrm{mod}\ \pi^2 \Zbb
		\end{equation*}
		where $\rho: \pi_1(N)\rightarrow \psl$  is a boundary parabolic representation induced from the flattenings and $R$ denotes the extended Roger's dilogarithm function (see Equation~(\ref{eqn:r})).
		\end{thm}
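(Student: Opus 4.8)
The plan is to interpret the flattened triangulation as a cycle in Neumann's \emph{extended Bloch group} $\widehat{\mathcal{B}}(\Cbb)$ and to recognize the extended Rogers dilogarithm as the Cheeger--Chern--Simons regulator on that group. Recall that a flattening $\alpha=(\alpha^0,\alpha^1,\alpha^2)$ determines a point of a $\Zbb^2$-cover $\widehat{\Cbb}$ of $\Cbb\setminus\{0,1\}$, that the free abelian group on $\widehat{\Cbb}$ modulo the \emph{lifted five-term relation} is the extended pre-Bloch group $\widehat{\mathcal{P}}(\Cbb)$, and that $\widehat{\mathcal{B}}(\Cbb)$ is the kernel of a natural boundary homomorphism $\widehat{\mathcal{P}}(\Cbb)\to\Cbb\wedge_\Zbb\Cbb$ built from the logarithmic data $\log z+p\pi i$ and $-\log(1-z)+q\pi i$.

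First I would check that the three hypotheses force $\beta:=\sum_{j=1}^n[\alpha_j]$ into $\widehat{\mathcal{B}}(\Cbb)$, i.e.\ that $\beta$ lies in the kernel of the boundary map. The boundary homomorphism records the failure of the logarithmic gluing relations along the $1$-skeleton and along the peripheral surfaces of $\Tcal$: the \emph{edge condition} is designed to kill the contribution of each interior edge, the \emph{cusp condition} kills the contribution coming from the boundary tori, and the \emph{parity condition} ensures that the chosen integral lifts $(p_j,q_j)$ are globally consistent under the five-term moves. I expect this bookkeeping to be direct but notationally heavy, since one must track both wedge components of the boundary map simultaneously.

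Next I would transport $\beta$ across Neumann's isomorphism $\widehat{\mathcal{B}}(\Cbb)\cong H_3(\psl;\Zbb)$. Under the developing map associated to the flattened, glued complex, each tetrahedron $\Delta_j$ becomes an ordered ideal simplex for the $\psl$-action on $\partial\Hb$, and the edge and cusp conditions are exactly what is needed for these simplices to assemble into a cycle representing the fundamental class $[\rho]$ induced by $\rho$. Thus $\beta$ and $[\rho]$ agree in $\widehat{\mathcal{B}}(\Cbb)$.

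Finally, the extended Rogers dilogarithm induces a homomorphism $R\colon\widehat{\mathcal{B}}(\Cbb)\to\Cbb/\pi^2\Zbb$; well-definedness is precisely the assertion that $R$ satisfies the lifted five-term functional equation and descends modulo $\pi^2\Zbb$ despite the branch ambiguities of $\mathrm{Li}_2$ and $\log$. Evaluating this homomorphism on $\beta=[\rho]$ then yields $\sum_{j=1}^n R(\alpha_j)$ on one side and, on the other, the value of the Cheeger--Chern--Simons class $\hat{c}$ on $[\rho]$, which by Dupont's computation of the regulator equals $i\,\mathrm{Vol}_\Cbb(\rho)$. I expect this last identification---that $R$ is the correct cocycle representative of the Chern--Simons class---to be the crux: it is where Chern--Weil theory enters and where the precise normalization giving $\Cbb/\pi^2\Zbb$ (rather than merely $\Cbb$) must be pinned down.
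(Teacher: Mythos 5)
Your proposal is correct, but note that the paper itself offers no proof of this theorem: it is quoted from Neumann \cite{neumann2004extended} (restated with the Dehn-filling condition as Theorem~\ref{thm:neumann}), and the paper's later argument only verifies that its specific flattenings satisfy the hypotheses. Your sketch---flattenings as points of the cover $\widehat{\Cbb}$, the parity/edge/cusp conditions forcing $\sum_j [\alpha_j]$ into the extended Bloch group $\widehat{\mathcal{B}}(\Cbb)$ and identifying it with the fundamental class under $\widehat{\mathcal{B}}(\Cbb)\cong H_3(\psl;\Zbb)$, then evaluating the extended Rogers dilogarithm as the Cheeger--Chern--Simons regulator $R:\widehat{\mathcal{B}}(\Cbb)\rightarrow \Cbb/\pi^2\Zbb$---is essentially the same approach as Neumann's actual proof, including the role of the parity condition (without which, as the paper exploits via \cite[Lemma 11.3]{neumann2004extended}, the conclusion only holds modulo $\frac{1}{2}\pi^2\Zbb$).
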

		(For simplicity, in the theorem,  we assume that every ideal tetrahedron is positively oriented; see Section \ref{sec:neum}.)
		Roughly speaking, the edge and cusp conditions are ``logarithm'' of the gluing equations and completeness condition, respectively. It follows that if the flattenings satisfy the edge and cusp conditions, then the cross-ratios should satisfy the gluing equations and completeness condition. We therefore obtain an \emph{induced} boundary parabolic representation $\rho : \pi_1(N)\rightarrow \psl$ as a holonomy.
 		We shall discuss Theorem \ref{thm:neuman} in details in Section \ref{sec:neum}.
 
	\subsection{Ptolemy varieties} \label{sec:intro2}
		A collection of flattenings satisfying the conditions in Theorem \ref{thm:neuman} shall give us the complex volume but finding such one may be difficult in general. Fortunately,   Zickert \cite{zickert2009volume}  remarkably overcame this potential difficulty through the notion of a Ptolemy variety. See  \cite[Remark 1.17]{garoufalidis2015complex}. We here briefly recall his key idea.

		Let $\Tcal$ be an ideal triangulation of the interior of a compact 3-manifold $N$ with non-empty boundary. We denote by $\Tcal^1$ the set of the oriented edges. For an oriented edge $e \in \Tcal^1$ we denote by $-e$ the same edge $e$ with its opposite orientation. 
		
		\begin{defn}[\cite{garoufalidis2015complex}] \label{def:ptl} The \emph{Ptolemy variety} $P(\Tcal)$ is the set of all assignments $c:\Tcal^1 \rightarrow \Cbb \setminus \{0\}$ satisfying $-c(e) = c(-e)$  for all $e \in \Tcal^1$ and 
			\begin{equation*}
				c(l_3)c(l_6)=c(l_1)c(l_4) + c(l_2)c(l_5)
			\end{equation*} for each tetrahedron $\Delta_j$ of $\Tcal$, where $l_i$'s are the edges of $\Delta_j$ as in Figure~\ref{fig:int}.
			\begin{figure}[!h]
				\centering
				\scalebox{1}{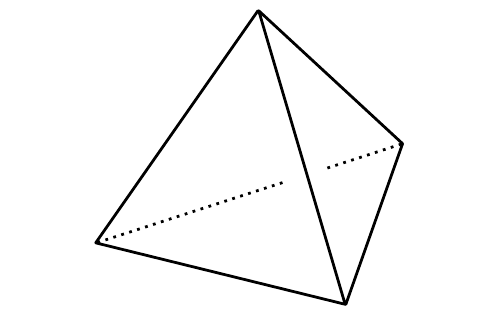}
				\caption{An ideal tetrahedron $\Delta_j$ of $\Tcal$}
				\label{fig:int}
			\end{figure}
			
		\end{defn}
	
		In \cite{garoufalidis2015complex} they showed that each point $c \in P(\Tcal)$ determines a boundary unipotent representation $\rho_c : \pi_1(N)\rightarrow \sl$ uniquely up to conjugation. Recall that a representation $\rho : \pi_1(N)\rightarrow \sl$ is called \emph{boundary unipotent} if $\textrm{tr}(\rho(\gamma)) = 2$ for every loop $\gamma$ (up to base point) in $\partial N$. We thus obtain a boundary parabolic representation, regarding $\rho_c$ as a $\psl$-representation.
		On the other hand, $c \in P(\Tcal)$ also determines the cross-ratio parameters of  $\Delta_j$ as follows:
		\begin{equation} \label{eqn:z}
			z_j= \pm \dfrac{c(l_1)c(l_4)}{c(l_2)c(l_5)},\  z'_j= \pm \dfrac{c(l_2)c(l_5)}{c(l_3)c(l_6)},\ z''_j= \pm \dfrac{c(l_3)c(l_6)}{c(l_1)c(l_4)}
		\end{equation}
		for Figure \ref{fig:int}, where $z_j,z'_j,$ and $z''_j$ are the cross-ratio parameters at $l_3,l_4$, and $l_2$, respectively. See \cite[Lemma 3.15]{zickert2009volume}.
		
		A punchline of \cite{zickert2009volume} is that taking a ``logarithm'' of the equation (\ref{eqn:z}) gives us a nice flattening in the sense of Theorem \ref{thm:neuman}. Namely, if we take a flattening $\alpha_j=(\alpha^0_j,\alpha^1_j,\alpha^2_j)$ of each $\Delta_j$ as
		\begin{equation*}
			\left\{
			\begin{array}{rcl}
				\alpha^0_j &=& \textrm{log}\, c(l_1) + \textrm{log}\, c(l_4) - \textrm{log}\,c( l_2) - \textrm{log}\,c(l_5) \\[3pt]
				\alpha^1_j &=& \textrm{log}\, c(l_2) + \textrm{log}\, c(l_5) - \textrm{log}\,c( l_3) - \textrm{log}\,c(l_6) \\[3pt]
				\alpha^2_j &=& \textrm{log}\, c(l_3) + \textrm{log}\, c(l_6) - \textrm{log}\,c( l_1) - \textrm{log}\,c(l_4)
			\end{array}
			\right.
		\end{equation*} then these flattenings automatically satisfy the edge and cusp conditions in Theorem \ref{thm:neuman}. Note that $\alpha_j$ is indeed a flattening, i.e. $\alpha^0_j+\alpha^1_j+\alpha^2_j=0$ and $\alpha_j^0 \equiv \textrm{log}\, z_j $, $\alpha_j^1 \equiv \textrm{log}\, z'_j$, $ \alpha_j^2 \equiv \textrm{log}\, z''_j$ in modulo $\pi i$. Moreover, even though the parity condition may fail, Zickert proved that they still give the complex volume of $\rho_c :\pi_1(N)\rightarrow \psl$:
		\begin{equation*}
		 	i \textrm{Vol}_\Cbb (\rho_c) \equiv \sum_{j} R(\alpha_j)\quad \textrm{mod}\ \pi^2 \Zbb.
		\end{equation*} 
		We refer \cite{zickert2009volume, garoufalidis2015complex} for details. 
		
		\subsection{Overview}
			
			Main aim of the paper is to extend the formula of Zickert to a representation that is not necessarily boundary parabolic. We give an overview of the paper here. 
			
			We assume that each boundary component $\Sigma_j$ of a compact 3-manifold $N$ is a torus with a fixed meridian $\mu_j$ and a longitude $\lambda_j$ for $1 \leq j \leq h$, where $h$ is the number of the components of $\partial N$.
			For $\kappa=(r_1,s_1,\cdots,r_h,s_h)$ we  denote by $N_{\kappa}$ the manifold obtained from $N$ by performing a Dehn-filling that kills the curve $r_j \mu_j + s_j \lambda_j$ on  each boundary torus $\Sigma_j$, where $(r_j,s_j)$ is either a pair of coprime integers or the symbol $\infty$ meaning that we do not fill $\Sigma_j$.
			
			In  Section \ref{sec:DPV}, we introduce a notion of the deformed Ptolemy variety $P_\sigma(\Tcal)$ which parameterizes representations $\pi_1(N)\rightarrow \sl$ that are not necessarily boundary unipotent. Namely, each point $c\in P_\sigma(\Tcal)$  determines a representation $\rho_c : \pi_1(N)\rightarrow \sl$  uniquely up to conjugation which is not necessarily boundary unipotent.
			The subscript $\sigma$ shall measure deformation; it specifies the eigenvalues of $\rho_c(\mu_j)$ and $\rho_c(\lambda_j)$ for $1 \leq j \leq h$ (which are not necessarily $1$). We refer to Section \ref{sec:DPV} for details.
 			We stress that $P_\sigma(\Tcal)$ is defined in a quite different way from the enhanced Ptolemy variety in \cite{zickert2016ptolemy}.

			Suppose $\rho_c : \pi_1(N)\rightarrow \sl$ factors through $\pi_1(N_\kappa)$ for some $\kappa$ as a $\psl$-representation. If the manifold $N_\kappa$ has a boundary, i.e. $(r_j,s_j)=\infty$ for some $1\leq j \leq h$, then we also assume that the induced representation $\rho_c : \pi_1(N_\kappa) \rightarrow \psl$ is boundary parabolic, so that the complex volume of $\rho_c$ is well-defined.
			In Section \ref{sec:flattening3}, 
			we show that the key idea of Zickert can be applied to this deformed case, not directly however, so the complex volume of $\rho_c$ can be computed in a similar way as in Section \ref{sec:intro2} (See Theorem \ref{thm:main1}). Using our formula, as an example, we compute the complex volume of Dehn-filled manifolds obtained from the figure-eight knot complement.
			\subsection{Acknowledgment}
				The author would like to thank Hyuk Kim for his guidance and helpful comments.
				The author was supported by Basic Science Research Program through the NRF of Korea funded by the Ministry of Education (2013H1A2A1033354). 
%
%
%
%
			
%
	
%
\section{Deformed Ptolemy varieties} \label{sec:DPV}

We first clarify the notion of cocycle with some notations that we will use throughout the paper. 
Let $G$ be a group with the identity element $I$ and let $X$ be a (possibly disconnected) topological space equipped with a polyhedral decomposition. We denote by $X^i$ the set of the oriented $i$-cells (unoriented when $i=0$). 
A \emph{cocycle} $\sigma : X^1 \rightarrow G$ is an assignment satisfying 
\begin{itemize}
	\item[(i)]  $\sigma(e) \sigma(-e)=I$ for all $e \in X^1$;
	\item[(ii)] $\sigma(e_1)\sigma(e_2)\cdots\sigma(e_m)=I$ for each face $f$ of $X$ where $e_1,\cdots,e_m$ are the boundary edges of $f$ in any cyclic order.
\end{itemize}
We denote by $Z^1(X;G)$ the set of all cocycles. 
A cocycle $\sigma \in Z^1(X;G)$ determines a homomorphism $\sigma_Y : \pi_1(Y) \rightarrow G$ uniquely up to conjugation for each connected component $Y$ of $X$ as follows. For $\gamma \in\pi_1(Y)$ we define $\sigma_Y(\gamma)$ by the product of the $\sigma$-values along an edge-path homotopic to $\gamma$ in $Y$. Note that the cocycle conditions (i) and (ii) give the well-definedness of $\sigma_Y$.


The set $C^0(X;G)$ of all assignments $\tau : X^0 \rightarrow G$ admits a group operation naturally  induced from $G$. The group $C^0(X;G)$ acts on  $Z^1(X;G)$ as follows.
\begin{equation*} 
Z^1(X;G) \times  C^0(X;G) \rightarrow Z^1(X;G), \ (\sigma,\tau) \mapsto \sigma^\tau
\end{equation*}
where $\sigma^\tau : X^1 \rightarrow G$ is a cocycle given by  $\sigma^\tau(e) = \tau(v_1)^{-1}\sigma(e)\tau(v_2)$ for all $e \in X^1$, where $v_1$ and $v_2$ denote the initial and terminal vertices of $e$, respectively.
Note that for $\sigma$ and $\sigma' \in Z^1(X;G)$ we have $\sigma_Y = \sigma'_Y$ up to conjugation for all connected components $Y$ of $X$ if and only if $\sigma' = \sigma^\tau$ for some $\tau \in C^0(X;G)$.

\subsection{Natural cocycles} \label{sec:NC}

Let $N$ be an oriented connected compact 3-manifold with non-empty boundary. We fix an ideal triangulation $\Tcal$ of the interior of $N$. This endows $N$ with a decomposition into truncated tetrahedra whose triangular faces triangulate the boundary $\partial N$. A \emph{truncated tetrahedron} is a polyhedron obtained from a tetrahedron by chopping off a small neighborhood of each ideal vertex. We denote by $N^i$ and $\partial N^i$ the set of the oriented $i$-cells (unoriented when $i=0$) of $N$ and $\partial N$, respectively. We call an edge of $\partial N$ a \emph{short edge} and call an edge of $N$ not in $\partial N$ a \emph{long edge}; see Figure \ref{fig:tet}. Note that each long-edge corresponds to an edge of $\Tcal$ in a natural way.
\begin{figure}[!h]
	\centering
	\scalebox{1}{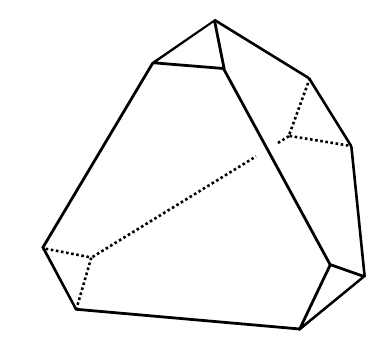}
	\caption{A truncated tetrahedron}
	\label{fig:tet}
\end{figure}


A cocycle $\phi \in Z^1(N;\sl)$ is called a \emph{natural cocycle} if $\phi(e)$ is of the counter-diagonal form for all long-edges~$e$ and is of the upper-triangular form for all short-edges~$e$. Note that the term `natural' is borrowed from \cite{garoufalidis2015ptolemy}. A natural cocycle $\phi$ corresponds to a pair of  assignments $\sigma : \partial N^1 \rightarrow \Cbb^\times=\Cbb \setminus \{0\}$ and $c : N^1 \rightarrow \Cbb$ satisfying
\begin{equation}\label{eqn:assigning}
	 \left\{
	 \begin{array}{ll}
	  \phi(e)=\begin{pmatrix}
	   0 & -c(e)^{-1} \\ c(e) & 0
	  \end{pmatrix} &\textrm{ for all long-edges }  e; \\[15pt]
	  \phi(e)=\begin{pmatrix}
	  \sigma(e) & c(e) \\ 0 & \sigma(e)^{-1}
	  \end{pmatrix} &\textrm{ for all short-edges } e.
	  \end{array} 
	  \right.
 \end{equation} We call $c(e)$ a \emph{short edge parameter} or a \emph{long-edge parameter} according to the type of an edge $e$. Note that (i) $c(-e)=-c(e)$ for all $e \in N^1$; (ii) each long-edge parameter is non-zero; (iii) the assignment $\sigma : \partial N^1 \rightarrow \Cbb^\times$ should be a cocycle, regarding $\Cbb^\times$ as a multiplicative group. We refer to $\sigma$ as the \emph{boundary cocycle} of $\phi$. 
 
\begin{prop}\label{lem:triangle} We consider a hexagonal face of $N$ and denote the edges as in Figure~\ref{fig:triangle}. Then $\phi$ satisfies cocycle condition for the hexagonal face if and only if 
	\allowdisplaybreaks
	\begin{equation}\label{eqn:short}
		\left\{
		\begin{array}{l}
	c(s_{12}) = -\dfrac{\sigma(s_{31})}{\sigma(s_{23})} \; \dfrac{c(l_3)}{c(l_1)c(l_2)}; \\[10pt]
	c(s_{23}) = -\dfrac{\sigma(s_{12})}{\sigma(s_{31})} \; \dfrac{c(l_1)}{c(l_2)c(l_3)}; \\[10pt]
	c(s_{31}) = -\dfrac{\sigma(s_{23})}{\sigma(s_{12})} \; \dfrac{c(l_2)}{c(l_3)c(l_1)}. 
		\end{array}
		\right.
	\end{equation}
	\begin{figure}[!h]
		\centering
		\scalebox{1}{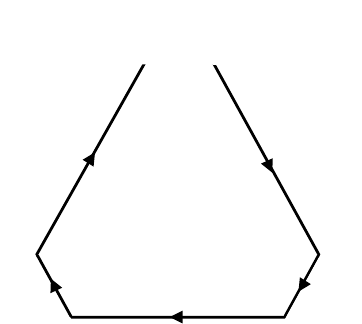}
		\caption{A hexagonal face of $N$}
		\label{fig:triangle}
	\end{figure}
\end{prop}
\begin{proof} The cocycle condition $\phi(l_1)\,\phi(s_{12})\,\phi(l_2)\,\phi(s_{23})\, \phi(l_3)\, \phi(s_{31}) = I$ is equivalent to
	\begin{align*}
		 \phi(l_1) \,\phi(s_{12})\,\phi(l_2) &= \phi(s_{31})^{-1} \phi(l_3)^{-1} \phi(s_{23}) ^{-1}\\[2pt]
		\Leftrightarrow {\scalefont{0.8}\begin{pmatrix}
		-\dfrac{c(l_2)}{\sigma(s_{12})c(l_1)} & 0 \\ c(l_1) c(l_2) c(s_{12}) & -\dfrac{\sigma(s_{12})c(l_1)}{c(l_2)}
		\end{pmatrix}} &=
		{\scalefont{0.8}\arraycolsep=2pt\begin{pmatrix}
		\dfrac{c(l_3) c(s_{31})}{\sigma(s_{23})} & -c(l_3)c(s_{23})c(s_{31}) + \dfrac{\sigma(s_{23})}{\sigma(s_{31}) c(l_3)} \\[10pt] -\dfrac{\sigma(s_{31})c(l_3)}{\sigma(s_{23})} & \sigma(s_{31})c(s_{23})c(l_3)
		\end{pmatrix}}.
	\end{align*}  We obtain the equation (\ref{eqn:short}) by comparing the entries of the above matrices.
\end{proof}
 Proposition \ref{lem:triangle} tells us that every short-edge parameter is non-zero and is uniquely determined by the boundary cocycle $\sigma$ and long-edge parameters.
\begin{prop}\label{lem:tetrahedron} We consider a truncated tetrahedron of $N$ and denote the long-edges as in Figure~\ref{fig:tetrahedron}. We also denote by $s_{ij}$  the short-edge joining from $l_i$ to $l_j$; see Figure \ref{fig:tetrahedron}. Then $\phi$ satisfies cocycle condition for all triangular faces on its boundary if and only if 
	\begin{equation} \label{eqn:ptolemy}
	c(l_3)c(l_6)=\dfrac{\sigma(s_{23})}{\sigma(s_{35})}\dfrac{\sigma(s_{26})}{\sigma(s_{65})}c(l_2)c(l_5)+\dfrac{\sigma(s_{13})}{\sigma(s_{34})}\dfrac{\sigma(s_{16})}{\sigma(s_{64})}c(l_1)c(l_4).
	\end{equation} We call the equation (\ref{eqn:ptolemy}) the \emph{$\sigma$-deformed Ptolemy equation}.
	\begin{figure}[!h]
		\centering
		\scalebox{1}{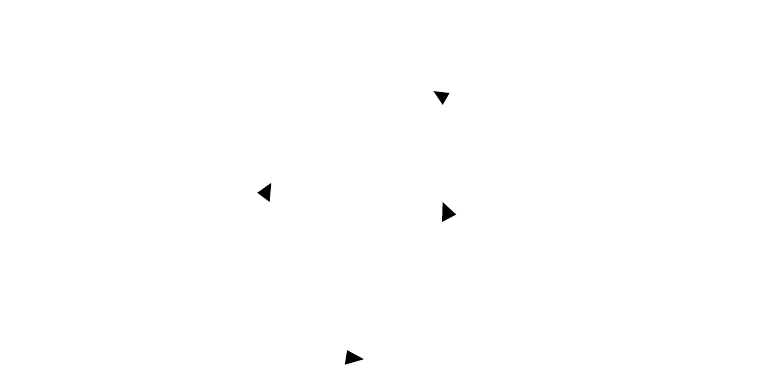}
		\caption{A truncated tetrahedron of $N$}
		\label{fig:tetrahedron}
	\end{figure}
\end{prop} 
\begin{proof} The cocycle condition $\phi(s_{23})\phi(s_{34})=\phi(s_{24})$ for the top triangular face is equivalent to $c(s_{24})=\sigma(s_{23}) c(s_{34}) + \sigma(s_{34})^{-1} c(s_{23})$. Replacing three short-edge parameters $c(s_{34}),c(s_{23}),$ and $c(s_{24})$ by $\sigma$ and $c(l_i)$ through Proposition \ref{lem:triangle}, we obtain the equation~(\ref{eqn:ptolemy}):
	\begin{align*}
		c(s_{24})&=\sigma(s_{23}) c(s_{34}) + \sigma(s_{34})^{-1} c(s_{23}) \\
		\Leftrightarrow -\dfrac{\sigma(s_{62})}{\sigma(s_{46})} \dfrac{c(l_6)}{c(l_2)c(l_4)}&=-\sigma(s_{23}) \dfrac{\sigma(s_{53})}{\sigma(s_{45})} \dfrac{c(l_5)}{c(l_3)c(l_4)} - \sigma(s_{34})^{-1} \dfrac{\sigma(s_{12})}{\sigma(s_{31})} \; \dfrac{c(l_1)}{c(l_2)c(l_3)}	\\
		\Leftrightarrow c(l_3)c(l_6)&=\dfrac{\sigma(s_{23})}{\sigma(s_{35})}\dfrac{\sigma(s_{26})}{\sigma(s_{65})}c(l_2)c(l_5)+\dfrac{\sigma(s_{13})}{\sigma(s_{34})}\dfrac{\sigma(s_{16})}{\sigma(s_{64})}c(l_1)c(l_4).
	\end{align*} We compute similarly for other three triangular faces, each of which results in the same equation (\ref{eqn:ptolemy}). 
\end{proof}

Recall that $\Tcal$ is an ideal triangulation of the interior of $N$. We denote by $\Tcal^1$ the set of the oriented edges of $\Tcal$. Identifying each edge of $\Tcal$ with a long edge of $N$ in a natural way, we have the following definition.

\begin{defn} \label{defn:ptolemy} For $\sigma \in Z^1(\partial N;\Cbb^\times)$ the \emph{$\sigma$-deformed Ptolemy variety} $P_\sigma(\Tcal)$ is the set  of all assignments $c : \Tcal^1 \rightarrow \Cbb^\times$ satisfying $-c(e) =c(-e)$ for all $e \in \Tcal^1$ and the $\sigma$-deformed Ptolemy equation (\ref{eqn:ptolemy}) for each ideal tetrahedron of $\Tcal$. 
\end{defn}
Propositions \ref{lem:triangle} and \ref{lem:tetrahedron} tell us that the equation (\ref{eqn:assigning}) gives the one-to-one correspondence
\begin{equation} \label{eqn:firstcor} \displaystyle\coprod_{\sigma \in Z^1(\partial N; \mkern 2mu \Cbb^\times)} \mkern-25mu P_\sigma(\Tcal) \hspace{0.8em} \overset{1:1}{\longleftrightarrow} \hspace{0.8em}
\left \{ \begin{array}{c} \textrm{natural cocycles }\\\phi \in Z^1(N;\sl)\end{array}\right\} \end{equation}
In particular, $P_\sigma(\Tcal	)$ corresponds to the set of all natural cocycles whose boundary cocycle is $\sigma$. 

\begin{rmk} When $\sigma$ is trivial, i.e. $\sigma(e)=1$ for all $e \in \partial N^1$, the $\sigma$-deformed Ptolemy variety $P_\sigma(\Tcal)$ reduces to the Ptolemy variety defined in \cite{garoufalidis2015complex}. See also Definition \ref{def:ptl}. This allows us to interpret $P_\sigma(\Tcal)$ as a generalization of the Ptolemy variety. 
\end{rmk}	

Recall that any (natural) cocycle determines a $\sl$-representation of $\pi_1(N)$ uniquely up to conjugation. We thus obtain the set map  
$$\rho: \coprod_\sigma P_\sigma(\Tcal) \rightarrow \textrm{Hom}(\pi_1(N),\sl)/_{\textrm{Conj}},\ c \mapsto \rho_c.$$ 
For each component $\Sigma$ of $\partial N$, it follows from the equation (\ref{eqn:assigning}) that
\begin{equation} \label{eqn:eign}
\rho_c(\gamma)  = \begin{pmatrix}
\sigma_\Sigma(\gamma) & * \\
0 & \sigma_\Sigma(\gamma)^{-1}
\end{pmatrix}
\end{equation} up to conjugation for all $\gamma \in \pi_1(\Sigma)$. 
Note that one can discard conjugation ambiguity of $\rho_c$ by fixing a base point of $\pi_1(N)$, while the homomorphism $\sigma_\Sigma:\pi_1(\Sigma) \rightarrow \Cbb^\times$ has no  conjugation ambiguity from the first (since the group $\Cbb^\times$ is commutative).

	\subsection{Isomorphisms} \label{sec:Isom}
	Recall that two cocycles $\sigma$ and $\sigma' \in Z^1(\partial N;\Cbb^\times)$ determine the same homomorphism on each component of $\partial N$ if and only if $\sigma' = \sigma^\tau$ for some   $\tau \in C^0(\partial N;\Cbb^\times)$.	In this case, we define a map 
	\begin{equation*}
\Phi : P_\sigma(\Tcal) \rightarrow P_{\sigma^\tau}(\Tcal), \ c \mapsto c^\tau 
\end{equation*} by $c^\tau(e) = \tau(v_1) \,\tau(v_2)\,c(e)$ for all $e \in \Tcal^1$ where $v_1$ and $v_2 \in N^0$ are the endpoints of $e$ viewed as a long-edge.

\begin{prop} \label{prop:isom} $\Phi$ is a well-defined isomorphism. 
\end{prop}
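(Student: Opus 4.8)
The plan is to realize $\Phi$ as (the restriction of) a conjugation action on natural cocycles, so that both well-definedness and bijectivity are read off from the correspondence (\ref{eqn:firstcor}) rather than from a direct manipulation of the deformed Ptolemy equation. Note first that every $0$-cell of the decomposition of $N$ into truncated tetrahedra lies on $\partial N$, so we may identify $N^0 = \partial N^0$; in particular $\tau \in C^0(\partial N;\Cbb^\times)$ is defined on the endpoints $v_1,v_2 \in N^0$ of any long edge, which is what makes $c^\tau$ and $\Phi$ meaningful. I then lift $\tau$ to a $0$-cochain $\hat\tau \in C^0(N;\sl)$ by setting $\hat\tau(v)=\mathrm{diag}(\tau(v),\tau(v)^{-1}) \in \sl$ for each $v \in N^0$.

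Let $\phi \in Z^1(N;\sl)$ be the natural cocycle corresponding to $c \in P_\sigma(\Tcal)$ under (\ref{eqn:firstcor}), and consider $\phi^{\hat\tau}$. A direct $2\times 2$ computation is the only calculation needed: conjugating the counter-diagonal matrix of a long edge $e$ by $\hat\tau(v_1)^{-1}$ on the left and $\hat\tau(v_2)$ on the right again gives a counter-diagonal matrix, now with lower-left entry $\tau(v_1)\tau(v_2)\,c(e)=c^\tau(e)$; while conjugating the upper-triangular matrix of a short edge $e$ gives an upper-triangular matrix with diagonal entry $\tau(v_1)^{-1}\sigma(e)\,\tau(v_2)=\sigma^\tau(e)$. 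Hence $\phi^{\hat\tau}$ is again natural, its boundary cocycle is $\sigma^\tau$, and its long-edge parameters are precisely $c^\tau$.

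Well-definedness is now automatic. Since the $C^0(N;\sl)$-action always produces a genuine element of $Z^1(N;\sl)$, the cochain $\phi^{\hat\tau}$ satisfies the cocycle condition on every triangular face, so Proposition \ref{lem:tetrahedron} (applied to $\phi^{\hat\tau}$) forces its long-edge parameters to satisfy the $\sigma^\tau$-deformed Ptolemy equation; equivalently, the correspondence (\ref{eqn:firstcor}) places $c^\tau$ in $P_{\sigma^\tau}(\Tcal)$. The sign condition $c^\tau(-e)=-c^\tau(e)$ is immediate from commutativity of $\Cbb^\times$. For the isomorphism claim I exhibit the inverse: because $C^0(N;\sl)$ acts as a group, $\phi \mapsto \phi^{\hat\tau}$ is a bijection with inverse $\phi \mapsto \phi^{\widehat{\tau^{-1}}}$, and this restricts to a bijection between natural cocycles with boundary cocycle $\sigma$ and those with boundary cocycle $\sigma^\tau$. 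Translated through (\ref{eqn:firstcor}), $\Phi$ is a bijection whose inverse sends $c$ to $c^{\tau^{-1}}$, i.e.\ multiplies each coordinate by the nonzero scalar $\tau(v_1)^{-1}\tau(v_2)^{-1}$. As both $\Phi$ and $\Phi^{-1}$ are coordinatewise multiplications by fixed nonzero constants, they are morphisms of the affine varieties, so $\Phi$ is an isomorphism.

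The one point worth flagging as the main obstacle is conceptual rather than computational: one should resist verifying the $\sigma^\tau$-deformed Ptolemy equation by hand. A direct check is possible but requires matching each factor $\sigma^\tau(s_{ij})/\sigma^\tau(s_{kl})$ and each $c^\tau(l_m)$ in (\ref{eqn:ptolemy}) against the vertices at which $\tau$ is evaluated, and then confirming that the accumulated $\tau$-factors on the two sides of the equation cancel in the same way. Recognizing $\Phi$ as conjugation by $\hat\tau$ makes all of this bookkeeping unnecessary, since the cocycle property of $\phi^{\hat\tau}$ is guaranteed for free.
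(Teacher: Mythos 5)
Your proof is correct, but it takes a genuinely different route from the paper's. The paper proves well-definedness by direct manipulation of the $\sigma$-deformed Ptolemy equation: using $\sigma^{\tau_1\tau_2}=(\sigma^{\tau_1})^{\tau_2}$ it reduces to a $\tau$ supported at a single vertex $x$, observes that only the two factors $\sigma(s_{23})$ and $\sigma(s_{34})$ in (\ref{eqn:ptolemy}) are affected, and multiplies both sides by $\tau(x)$ to exhibit the $\sigma^\tau$-deformed equation for $c^\tau$; the inverse is then given by $\tau^{-1}$, exactly as in your last step. You instead lift $\tau$ to $\hat\tau\in C^0(N;\sl)$ and realize $\Phi$ as the restriction to natural cocycles of the $C^0(N;\sl)$-action on $Z^1(N;\sl)$, so that $c^\tau\in P_{\sigma^\tau}(\Tcal)$ falls out of the correspondence (\ref{eqn:firstcor}), i.e.\ out of Propositions \ref{lem:triangle} and \ref{lem:tetrahedron} applied to the cocycle $\phi^{\hat\tau}$, with no bookkeeping in (\ref{eqn:ptolemy}). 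This is legitimate: the correspondence (\ref{eqn:firstcor}) is in place before Proposition \ref{prop:isom}, the identification $N^0=\partial N^0$ you invoke is the one the paper itself uses, and your $2\times 2$ computations check out (the long-edge matrix stays counter-diagonal with lower-left entry $\tau(v_1)\tau(v_2)\,c(e)$ and upper-right entry $-\bigl(\tau(v_1)\tau(v_2)\,c(e)\bigr)^{-1}$, and the short-edge matrix stays upper-triangular with diagonal $\tau(v_1)^{-1}\sigma(e)\tau(v_2)$). It is worth noting that your $\hat\tau$ is literally the device the paper deploys one proposition later: the proof of Proposition \ref{prop:isom2} verifies $\phi_{c^\tau}=(\phi_c)^{\hat\tau}$ to conclude $\rho_c=\rho_{c^\tau}$ up to conjugation. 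So your argument effectively merges the two propositions --- once you know $\phi^{\hat\tau}$ is natural with boundary cocycle $\sigma^\tau$ and long-edge parameters $c^\tau$, both the well-definedness of $\Phi$ and the commutativity of the diagram in Proposition \ref{prop:isom2} come for free. What the paper's computational proof buys is a self-contained, coordinate-level view of how each term of the Ptolemy equation transforms; what yours buys is the elimination of that bookkeeping and a simultaneous proof of the equivariance statement.
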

\begin{proof} Note that  $\sigma^{\tau_1 \tau_2}=(\sigma^{\tau_1})^{\tau_2}$ for any $\tau_1, \tau_2 \in C^0(\partial N;\Cbb^\times)$. We thus may assume that $\tau\in C^0(\partial N;\Cbb^\times)$ is trivial except on a single vertex $x \in \partial N^0$. Suppose $x$ is the initial vertex of the long-edge $l_3$ as in Figure~\ref{fig:tetrahedron}. Then, in the equation~(\ref{eqn:ptolemy}), only two terms $\sigma(s_{23})$ and $\sigma(s_{34})$  are affected by the $\tau$-action: $\sigma^\tau(s_{23}) = \sigma(s_{23})\tau(x)$ and $ \sigma^\tau(s_{34}) = \tau(x)^{-1} \sigma(s_{34}) $.  Multiplying $\tau(x)$ to both sides of the equation~(\ref{eqn:ptolemy}), we have $c^\tau \in P_{\sigma^\tau}(\Tcal)$:
	\begin{equation*}
	\begin{array}{rrl}
	\tau(x) c(l_3)\, c(l_6) 	&=&\dfrac{\sigma(s_{23})\tau(x)}{\sigma(s_{35})}\dfrac{\sigma(s_{26})}{\sigma(s_{65})}c(l_2)c(l_5)+\dfrac{\sigma(s_{13})}{\tau(x)^{-1}\sigma(s_{34})}\dfrac{\sigma(s_{16})}{\sigma(s_{64})}c(l_1) c(l_4) \\[13pt]
	\Leftrightarrow c^\tau(l_3)\, c^\tau(l_6) 	&=&\dfrac{\sigma^\tau(s_{23})}{\sigma^\tau(s_{35})}\dfrac{\sigma^\tau(s_{26})}{\sigma^\tau(s_{65})}c^\tau(l_2)c^\tau(l_5)+\dfrac{\sigma^\tau(s_{13})}{\sigma^\tau(s_{34})}\dfrac{\sigma^\tau(s_{16})}{\sigma^\tau(s_{64})}c^\tau(l_1) c^\tau(l_4).
	\end{array}
	\end{equation*} 
	Recall that $c^\tau(l_i) = c(l_i)$ for $i \in\{1,2,4,5,6\}$ and $c^\tau(l_3) = \tau(x) c(l_3)$.
	On the other hand, the inverse 	$\tau^{-1} \in C^0(\partial N;\Cbb^\times)$ (as a group element) exactly gives the inverse morphism of $\Phi$. 
\end{proof}


\begin{prop}\label{prop:isom2} The following diagram commutes:
	\begin{equation*}
	\begin{tikzcd}[column sep=25pt]
	P_\sigma(\Tcal) \arrow{rd}{\rho} \arrow{d}{\Phi}[swap]{\simeq} & \\
	P_{\sigma^\tau}(\Tcal) \arrow{r}{\rho} &  \textrm{Hom}(\pi_1(N),\, \sl)/_{\textrm{Conj}}
	\end{tikzcd}
	\end{equation*}
\end{prop}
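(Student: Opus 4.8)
The plan is to lift the gauge transformation $\tau$ from the boundary, where it lives as a $\Cbb^\times$-valued $0$-cochain, up to an $\sl$-valued $0$-cochain on all of $N$, and then to recognize that the isomorphism $\Phi$ is nothing but the induced gauge action on natural cocycles. Since every vertex of the truncated-tetrahedron decomposition lies on $\partial N$, we have $N^0 = \partial N^0$, so $\tau \in C^0(\partial N;\Cbb^\times)$ may be regarded as an assignment on all of $N^0$. I would define $\widetilde{\tau} \in C^0(N;\sl)$ by the diagonal embedding
\begin{equation*}
\widetilde{\tau}(v) = \begin{pmatrix} \tau(v) & 0 \\ 0 & \tau(v)^{-1} \end{pmatrix}, \qquad v \in N^0,
\end{equation*}
and let $\phi \in Z^1(N;\sl)$ be the natural cocycle corresponding to $c \in P_\sigma(\Tcal)$ under the one-to-one correspondence (\ref{eqn:firstcor}).

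The key step is a direct computation showing that $\phi^{\widetilde{\tau}}$ is again a natural cocycle, and that it corresponds precisely to $c^\tau \in P_{\sigma^\tau}(\Tcal)$. For a long-edge $e$ with endpoints $v_1, v_2$ one evaluates $\widetilde{\tau}(v_1)^{-1}\phi(e)\widetilde{\tau}(v_2)$ using the counter-diagonal form in (\ref{eqn:assigning}); because the two scalars occupy the anti-diagonal, they multiply, and the new long-edge parameter becomes $\tau(v_1)\tau(v_2)c(e) = c^\tau(e)$, exactly the formula defining $\Phi$ in Proposition~\ref{prop:isom}. For a short-edge $e$ one instead uses the upper-triangular form; now the scalars sit on the diagonal, so the $(1,1)$-entry transforms as $\tau(v_1)^{-1}\sigma(e)\tau(v_2) = \sigma^\tau(e)$, matching the definition of the $\tau$-action on boundary cocycles. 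In particular $\phi^{\widetilde{\tau}}$ has counter-diagonal (resp.\ upper-triangular) form on long (resp.\ short) edges, hence is natural with boundary cocycle $\sigma^\tau$ and long-edge parameters $c^\tau$; by (\ref{eqn:firstcor}) it is the natural cocycle attached to $c^\tau$. The short-edge parameters need no separate check, since Proposition~\ref{lem:triangle} guarantees they are determined by $\sigma^\tau$ and $c^\tau$.

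To conclude, I would invoke the general principle recorded in Section~\ref{sec:DPV}: two cocycles related by a $C^0$-action induce homomorphisms that agree up to conjugation on each connected component. Applying this to $\phi^{\widetilde{\tau}} = \phi^{\widetilde{\tau}}$ on the connected manifold $N$ gives that the representation induced by $\phi^{\widetilde{\tau}}$ is conjugate to that induced by $\phi$; that is, $\rho_{c^\tau} = \rho_c$ in $\mathrm{Hom}(\pi_1(N),\sl)/_{\mathrm{Conj}}$, which is exactly the commutativity of the diagram. The main obstacle is conceptual rather than computational: one must bridge the gap between $\Phi$, which manipulates only the long-edge (Ptolemy) data, and $\rho$, which is defined through the full natural cocycle recording every edge. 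The lift $\widetilde{\tau}$ is what resolves this, and the only thing to verify carefully is that a single $\sl$-gauge transformation reproduces \emph{both} the change $\sigma \mapsto \sigma^\tau$ on short edges and the change $c \mapsto c^\tau$ on long edges simultaneously — which is precisely the content of the matrix computation above.
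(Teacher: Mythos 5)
Your proposal is correct and takes essentially the same approach as the paper: the paper's proof defines the identical diagonal lift $\hat{\tau}\in C^0(N;\sl)$, verifies $\phi_{c^\tau}=(\phi_c)^{\hat{\tau}}$ by matrix computations on long and short edges, and concludes by the conjugacy-invariance of the homomorphism induced by gauge-equivalent cocycles. Your only deviations are cosmetic — you treat a general $\tau$ at once instead of reducing to a $\tau$ supported at a single vertex, and you replace the paper's explicit short-edge verification (at $s_{23}$ and $s_{34}$) by appealing to the uniqueness of short-edge parameters from Proposition~\ref{lem:triangle}, both of which are valid (aside from the harmless typo ``$\phi^{\widetilde{\tau}}=\phi^{\widetilde{\tau}}$'', which should read $\phi_{c^\tau}=\phi^{\widetilde{\tau}}$).
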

\begin{proof} Let $\phi_c$ and $\phi_{c^\tau} \in Z^1(N;\sl)$ be the natural cocycles corresponding to $c \in P_\sigma(\Tcal)$ and $\Phi(c)=c^\tau \in P_{\sigma^\tau} (\Tcal)$, respectively. Let $\hat{\tau}\in C^0(N;\sl)$ be an assignment given by \[\hat{\tau}(v)=\arraycolsep=0.7pt\begin{pmatrix} \tau(v) & 0 \\ 0 & \tau(v)^{-1}  \end{pmatrix} \] for all $v \in N^0=\partial N^0$. 
As in the proof of Proposition \ref{prop:isom}, we may assume that $\hat{\tau}$ is trivial except at the single vertex $x$ as in Figure \ref{fig:tetrahedron}. The following equations show that $\phi_{c^\tau} = (\phi_c)^{\hat{\tau}}$:
	\allowdisplaybreaks
	\begin{talign*}
		\textrm{at }l_3:& \arraycolsep=3pt \begin{pmatrix} 0 & -\tau(x)^{-1} c(l_3)^{-1} \\ \tau(x) c(l_3) & 0 \end{pmatrix}  = \begin{pmatrix} 0 & c(l_3)^{-1} \\ c(l_3) & 0 \end{pmatrix} \hat{\tau}(x)  \\[4pt]
		\textrm{at }s_{23} :& \arraycolsep=3pt \begin{pmatrix} \tau(x) \sigma(s_{23}) & \frac{\sigma(s_{31})}{\sigma(s_{12})} \frac{c(l_1)}{\tau(x)c(l_3)c(l_2)} \\[4pt] 0 & \tau(x)^{-1} \sigma(s_{23})^{-1} \end{pmatrix} = \begin{pmatrix} \sigma(s_{23}) & \frac{\sigma(s_{31})}{\sigma(s_{12})} \frac{c(l_1)}{c(l_3)c(l_2)} \\[3pt] 0 &  \sigma(s_{23})^{-1} \end{pmatrix} \hat{\tau}(x)\\[4pt]
		\textrm{at }s_{34}:& \arraycolsep=3pt \begin{pmatrix} \tau(x)^{-1} \sigma(s_{34}) & \frac{\sigma(s_{45})}{\sigma(s_{53})} \frac{c(l_5)}{\tau(x)c(l_4)c(l_3)} \\[4pt] 0 & \tau(x) \sigma(s_{34})^{-1} \end{pmatrix} = \hat{\tau}(x)^{-1} \begin{pmatrix} \sigma(s_{34}) & \frac{\sigma(s_{45})}{\sigma(s_{53})} \frac{c(l_5)}{c(l_4)c(l_3)} \\[4pt] 0 &  \sigma(s_{34})^{-1} \end{pmatrix} 
	\end{talign*} for Figure \ref{fig:tetrahedron}. Thus the induced homomorphisms $\rho_c$ and $\rho_{c^\tau}$ agree up to conjugation.
\end{proof} 


The cocycle $\sigma^\tau$ coincides with $\sigma$ if and only if $\tau \in C^0(\partial N;\Cbb^\times)$ is  constant on each component of $\partial N$. In this case, the map $\Phi$ induces a  $(\Cbb^\times)^h$-action on $P_\sigma(\Tcal)$, called the \emph{diagonal action}  \cite{garoufalidis2015ptolemy,zickert2016ptolemy}, where $h$ is the number of the components of $\partial N$. Precisely, enumerating the components of $\partial N$ by $\Sigma_1,\cdots, \Sigma_h$,  we have $$(\Cbb^\times)^h \times P_\sigma(\Tcal)\rightarrow P_\sigma(\Tcal), \quad ((z_1,\cdots,z_h), c) \mapsto (z_1,\cdots,z_h) \boldsymbol{\cdot} c,$$ where $(z_1,\cdots,z_h) \boldsymbol{\cdot} c : \Tcal^1 \rightarrow \Cbb^\times$ is defined by $((z_1,\cdots,z_h) \boldsymbol{\cdot} c)(e) = z_iz_j\mkern 1mu c(e)$ where $i$ and $j$ (possibly $i=j$) are the indices of the components of $\partial N$ joined by $e$. 
\begin{defn} The \emph{reduced $\sigma$-deformed Ptolemy variety} $\overline{P}_\sigma(\Tcal)$ is the quotient of $P_\sigma(\Tcal)$ by the diagonal action.
\end{defn}
	\begin{exam} \label{ex:figeig} Let $N$ be the knot exterior of the figure-eight knot in $S^3$. It is well-known that the interior of $N$ can be decomposed into two ideal tetrahedra $\Delta_1$ and $\Delta_2$ \cite{thurston1979geometry}. We denote the long edges by $l_1$ and $l_2$, and the short-edges by $s_1,s_2,\cdots,s_{12}$ as in Figure~\ref{fig:figure_eight}. We choose a meridian $\mu$ and a longitude $\lambda$ of the knot as in  Figure~\ref{fig:FE_cocycle}.  
	Note that the longitude $\lambda$ here is inversed to the one given in \cite{thurston1979geometry}.
	\begin{figure}[!h]
		\centering
		\scalebox{1}{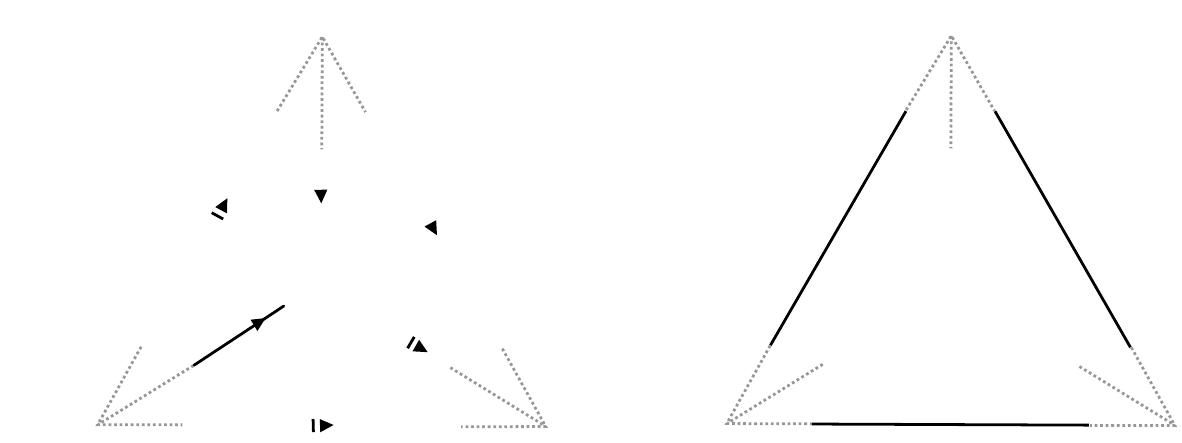}
		\caption{The figure eight knot complement}
		\label{fig:figure_eight}	
	\end{figure}	
	\begin{figure}[!h]
		\centering
		\scalebox{1}{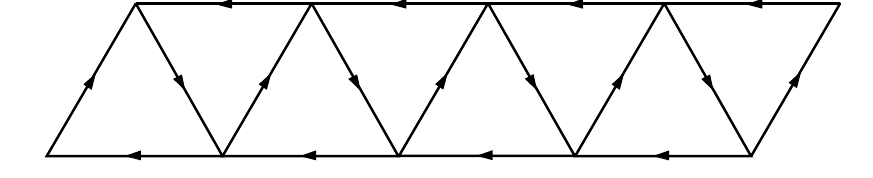}
		\caption{The boundary torus}
		\label{fig:FE_cocycle}
	\end{figure}
	
	Let $\Sigma$ be the boundary torus of $N$. 	We choose a boundary cocycle $\sigma\in Z^1( \Sigma; \Cbb^\times)$ for $M$ and $L \in \Cbb^\times$ as follows so that the induced homomorphism $\sigma_\Sigma :\pi_1(\Sigma) \rightarrow \Cbb^\times$ satisfies $\sigma_\Sigma(\mu)=M$ and $\sigma_\Sigma(\lambda)=L$: $\sigma(s_4)=\sigma(s_7)=\sigma(s_{10})=1$, $\sigma(s_2)=\sigma(s_5)=\sigma(s_8)=\sigma(s_{11})=M$, $\sigma(s_6)=\sigma(s_9)=\sigma(s_{12})=M^{-1}$, $\sigma(s_1)=L^{-1}M^{-2}$, and $\sigma(s_3)=LM$.
	
	The $\sigma$-deformed Ptolemy variety $P_\sigma(\Tcal)$ is given by the set of all assignments $c:\{l_1,l_2\}\rightarrow \Cbb^\times$ satisfying 	\begin{equation*}
	\left\{
	\begin{array}{crcl}
	\Delta_1 :& -c(l_1) c(l_2) &=&  L^{-1}M^{-2} \, c(l_2)^{2} - M^2\, c(l_1)^2 \\[2pt]
	\Delta_2 :& c(l_1)c(l_2) &=& c(l_2)^2 -L\,c(l_1)^2
	\end{array}
	\right.
	\end{equation*} (with $c(-l_i)=-c(l_i)$). The reduced $\sigma$-deformed Ptolemy variety $\overline{P}_\sigma(\Tcal)$ can be identified with the set of all $z=\frac{c(l_1)}{c(l_2)}\in \Cbb^\times$ satisfying 
	\begin{equation*}
	L^{-1}M^{-2}\, + z - M^2z^2=0  \textrm{ and } 1-z-L \, z^2 =0.
	\end{equation*} Taking the resultant of these two quadratic equations to eliminate $z$, we obtain
	\begin{equation} \label{eqn:Apoly}
	L - LM^2 -  M^4 - 2 LM^4 - L^2 M^4 - LM^6 + LM^8=0
	\end{equation} which is the $\sl$ A-polynomial of the figure-eight knot \cite{cooper1994plane}. It is clear that the pair $(M,L)$ should satisfy the equation~(\ref{eqn:Apoly}), otherwise $P_\sigma(\Tcal)$ shall be empty. 
\end{exam}

\subsection{Pseudo-developing maps} \label{sec:PDM}

	Recall that $N$ is a compact 3-manifold with non-empty boundary and $\Tcal$ is an ideal triangulation of the interior of $N$.  Let $\widetilde{N}$ be the universal cover of $N$ and let $\widehat{N}$ be a topological space obtained from $\widetilde{N}$ by collapsing each boundary component to a point. We call these points the \emph{vertices} of $\widehat{N}$.
	The lifting of $\Tcal$ to the interior of $\widetilde{N}$ induces the notion of \emph{long edges} and \emph{short edges} of $\widetilde{N}$, and also the notion of \emph{edges} of $\widehat{N}$.
	
	We fix a base point $x_0$ of $\pi_1(N)$ in $N^0$ together with its lifting $\widetilde{x}_0$ in $\widetilde{N}^0$ so as to fix the $\pi_1(N)$-action on $\widehat{N}$. 	
	\begin{defn} \label{def:psd}
		A pair $(\Dcal,\rho)$ of a map $\Dcal : \widehat{N}\rightarrow  \Hb$  and a representation $\rho:\pi_1(N)\rightarrow  \sl$  is called a \emph{pseudo-developing map} if
		\begin{itemize}
			\item $\Dcal$ is $\rho$-equivariant, i.e. $\Dcal(\gamma \boldsymbol{\cdot} x) = \rho(\gamma)\, \Dcal(x)$ for all $\gamma \in \pi_1(N)$ and $x\in \widehat{N}$;
			\item $\Dcal$ sends all vertices of $\widehat{N}$ to $\partial \Hb$;
			\item $\Dcal(v_1) \neq \Dcal(v_2)$ for every pair of vertices $v_1$ and $v_2$  joined by an edge of $\widehat{N}$.
		\end{itemize}	
	\end{defn}
	Note that if $(\Dcal,\rho)$ is a pseudo-developing map, then $(g \Dcal,g \rho g^{-1})$ is also a pseudo-developing map for any $g\in\sl$. We say that two pseudo-developing maps $(\Dcal_1,\rho_1)$ and $(\Dcal_2,\rho_2)$ are \emph{equivalent} if $\rho_2= g \rho_1 g^{-1}$ and $\Dcal_2$ coincides with $g \Dcal_1$ only on the vertices of $\widehat{N}$ for some $g \in \sl$. We denote the equivalence class of $(\Dcal,\rho)$ by $[\Dcal,\rho]$. See, for instance, \cite{segerman2011pseudo, zickert2009volume}.

	In this subsection, we clarify the relationship between natural cocycles and pseudo-developing maps. We first construct an intermediate object called a decoration (cf. \cite{garoufalidis2015complex,zickert2009volume}).
	\begin{defn}\label{defn:deco}
		 A pair $(\psi,\rho)$ of an assignment $\psi : \widetilde{N}^0 \rightarrow \Cbb^2$ and a representation $\rho : \pi_1(N)\rightarrow \sl$ is called a \emph{decoration} if
	\begin{itemize} 
		\item $\psi$ is $\rho$-equivariant, $\psi(\gamma\boldsymbol{\cdot} v)=\rho(\gamma) \psi(v)$ for all $\gamma \in \pi_1(N)$ and $v\in \widetilde{N}^0$;
		\item $\textrm{det}\big(\psi(v_1),\,\psi(v_2)\big) \neq 0$ if $v_1$ and $v_2$ are joined by a long-edge of $\widetilde{N}$;
		\item  $\textrm{det}\big(\psi(v_1),\,\psi(v_2)\big) = 0$ if $v_1$ and $v_2$ are joined by a short-edge of $\widetilde{N}$,
	\end{itemize} where an element of $\Cbb^2$ is viewed as a column vector. Note that the second condition implies that $\psi(v)$ should be non-zero for all $v\in \widetilde{N}^0$. 
	\end{defn}

	We first construct a correspondence
	\begin{equation}\label{eqn:secondcor} \left \{ \begin{array}{c} \textrm{natural cocycles }\\\phi \in Z^1(N;\sl)\end{array}\right\} \hspace{0.2em} \rightarrow \hspace{0.2em}
	\left \{ \textrm{decorations } (\psi,\rho)\right\}/_{\sim} \end{equation} where the equivalence relation $\sim$ in the right-hand side is defined by $(\psi,\rho)\sim (g\psi,g\rho g^{-1})$ for $g\in\sl$. We denote the equivalence class of $(\psi,\rho)$ by $[\psi,\rho]$.
	Since the base point of $\pi_1(N)$ is fixed, a natural cocycle $\phi \in Z^1(N;\sl)$ induces a unique homomorphism $ \rho:\pi_1(N)\rightarrow \sl$ without conjugation ambiguity. 
	 We denote by  $\phT \in Z^1(\widetilde{N}; \sl)$ the cocycle obtained by lifting $\phi$.
	We then consider an assignment
	$\phVT \in C^0(\widetilde{N}; \sl)$ satisfying
	\begin{equation}\label{eqn:v} \phVT(\widetilde{x}_0)=I \textrm{ and } \phT(e)=\phVT(v_1)^{-1}\,\phVT(v_2)
	\end{equation} for all $e \in \widetilde{N}^1$, where $v_1$ and $v_2$ denote the initial and terminal vertices of $e$, respectively. Such an assignment $\phVT$ exists uniquely and is by definition $\rho$-equivariant. Finally, we define $\psi : \widetilde{N}^0\rightarrow \Cbb^2$ by the first column part of $\phVT$, i.e. $$\psi(x)=\phVT(x)\, \dbinom{1}{0}$$ for all $x \in \widetilde{N}^0$. From the facts that $\phVT$ is $\rho$-equivariant and $\phi$ is a natural cocycle, the pair $(\psi,\rho)$ is a decoration.  We define the correspondence (\ref{eqn:secondcor}) by sending $\phi$ to $[\psi,\rho]$. 
	\begin{prop} The correspondence $\phi \mapsto [\psi,\rho]$ is surjective.	
	\end{prop}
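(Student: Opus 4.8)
The plan is to invert the construction preceding the statement. Fix a class $[\psi,\rho]$ and a representative $(\psi,\rho)$. I will produce an explicit $\rho$-equivariant lift $\phVT\in C^0(\widetilde N;\sl)$ whose first column is $\psi$, set $\phT(e)=\phVT(v_1)^{-1}\phVT(v_2)$, descend $\phT$ to a cocycle $\phi\in Z^1(N;\sl)$, check that $\phi$ is natural, and finally verify that $\phi$ is sent to $[\psi,\rho]$ under the correspondence (\ref{eqn:secondcor}).

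The construction rests on one combinatorial fact: \emph{each vertex $v\in\widetilde N^0$ is incident to exactly one long edge}. Indeed, in a single truncated tetrahedron the six long edges match its twelve vertices in pairs, and the face gluings identify long edges with long edges, so the property descends to $N$ and lifts to $\widetilde N$. Write $w(v)$ for the other endpoint of the unique long edge at $v$ and $d_v=\det(\psi(v),\psi(w(v)))$; since this edge is long, $d_v\neq0$. Define $\phVT(v)=\bigl(\,\psi(v)\ \big|\ d_v^{-1}\psi(w(v))\,\bigr)\in\sl$, the bar separating the two columns. Using $w(\gamma v)=\gamma\,w(v)$, the $\rho$-equivariance of $\psi$, and $\det\rho(\gamma)=1$ (whence $d_{\gamma v}=d_v$), one checks $\phVT(\gamma v)=\rho(\gamma)\phVT(v)$. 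Thus $\phT(e):=\phVT(v_1)^{-1}\phVT(v_2)$ is deck-invariant and descends to $\phi$ on $N^1$; being a coboundary it automatically satisfies the cocycle conditions, so $\phi\in Z^1(N;\sl)$.

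The main point is that $\phi$ is natural, and this is exactly where the one-long-edge-per-vertex fact is used. For a short edge $\det(\psi(v_1),\psi(v_2))=0$ gives $\psi(v_2)=\mu\,\psi(v_1)$ with $\mu\neq0$, so the first column of $\phT(e)$ equals $\phVT(v_1)^{-1}\psi(v_2)=\mu\binom{1}{0}$ and $\phT(e)$ is upper-triangular. For a long edge $e$ from $v_1$ to $v_2$ we have $w(v_1)=v_2$ and $w(v_2)=v_1$, so with $d=d_{v_1}$ the columns of $\phVT(v_1)$ are $\psi(v_1),\,d^{-1}\psi(v_2)$ and those of $\phVT(v_2)$ are $\psi(v_2),\,-d^{-1}\psi(v_1)$; a direct computation gives $\phT(e)=\bigl(\begin{smallmatrix}0 & -d^{-1}\\ d & 0\end{smallmatrix}\bigr)$, which is counter-diagonal. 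Hence $\phi$ has the form (\ref{eqn:assigning}) and is a natural cocycle.

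It remains to compute the image of $\phi$ under (\ref{eqn:secondcor}). Put $g_0=\phVT(\widetilde x_0)$. Then $g_0^{-1}\phVT$ solves (\ref{eqn:v}) and takes the value $I$ at $\widetilde x_0$, so by uniqueness it is the canonical lift used to reconstruct the decoration; therefore the reconstructed assignment is $g_0^{-1}\psi$. Telescoping $\phT$ along the lift of a loop based at $x_0$ shows the homomorphism induced by $\phi$ is $\rho_\phi=g_0^{-1}\rho\,g_0$. Consequently $\phi$ maps to $[g_0^{-1}\psi,\,g_0^{-1}\rho\,g_0]=[\psi,\rho]$, the last equality being the relation $\sim$ with $g=g_0^{-1}$. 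This exhibits an arbitrary decoration class as the image of a natural cocycle, proving surjectivity.
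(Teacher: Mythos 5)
Your proof is correct and takes essentially the same route as the paper: both define $\phVT(x)=\bigl(\psi(x)\mid \det(\psi(x),\psi(x'))^{-1}\psi(x')\bigr)$ using the long-edge neighbor $x'$, set $\phT(e)=\phVT(v_1)^{-1}\phVT(v_2)$, verify $\rho$-equivariance and deck-invariance, and descend to a natural cocycle $\phi\in Z^1(N;\sl)$. You merely make explicit what the paper leaves as ``one can check'' --- the uniqueness of the long edge at each vertex, the upper-triangular/counter-diagonal computations establishing naturality, and the normalization by $g_0=\phVT(\widetilde{x}_0)$ showing $\phi\mapsto[\psi,\rho]$ --- and all of these verifications are accurate.
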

	\begin{proof}
	Let $(\psi,\rho)$ be  any decoration. We define $\phVT \in C^0(\widetilde{N}; \sl)$ by \begin{equation*} \label{eqn:phvt} \phVT(x) := \begin{pmatrix} \psi(x), &  \frac{1}{\textrm{det}(\psi(x),\psi(x'))}\,\psi(x') \end{pmatrix}\in\sl \end{equation*} for all 
	$x \in \widetilde{N}^0$, where $x'$ is another vertex of $\widetilde{N}$ connected with $x$ by a long-edge. Note that the second condition of decoration guarantees $\textrm{det}(\psi(x),\psi(x'))\neq0$. Since $\psi$ is $\rho$-equivariant, so is $\phVT$. We define $\phT\in Z^1(\widetilde{N};\sl)$ by	$\phT(e)=\phVT(v_1)^{-1}\,\phVT(v_2)$ for all $e \in \widetilde{N}^1$, where $v_1$ and $v_2$ denote the initial and terminal vertices of $e$, respectively. Then it satisfies \begin{equation*}
		\begin{array}{rcl}
			\phT(\gamma \boldsymbol{\cdot} e) &=& \phVT(\gamma \boldsymbol{\cdot} v_1)^{-1} \phVT(\gamma \boldsymbol{\cdot} v_2)\\[3pt]
			&=&(\rho(\gamma) \phVT(v_1))^{-1}\rho(\gamma) \phVT(v_2) \\[3pt]
			&=& \phVT(v_1)^{-1} \phVT(v_2) \\[3pt]
			&=&\phT(e)
		\end{array}
	\end{equation*} for all $\gamma \in \pi_1(N)$ and $e \in \widetilde{N}^1$. Therefore, we obtain $\phi\in Z^1(N;\sl)$ by projecting $\phT$ to $N$. One can check that $\phi$ is a natural cocycle and hence the correspondence (\ref{eqn:secondcor}) is surjective.
	\end{proof}

	\begin{rmk} Let $(\psi,\rho)$ be a decoration and let $c \in P_\sigma(\Tcal)$ be a corresponding element under the correspondences (\ref{eqn:firstcor}) and (\ref{eqn:secondcor}). Then $\sigma$ and $c$  can be directly determined by $\psi$  as follows. For an edge $e \in N^1$ 
		\begin{equation*} 
		\left\{
		\begin{array}{rcll}
		\psi(v_2) &=& \sigma(e) \, \psi(v_1) &\textrm{ if $e$ is a short-edge} \\[4pt]
		c(e) & =& \textrm{det}\big(\psi(v_1),\, \psi(v_2)\big) &\textrm{ if $e$ is a long-edge.} 
		\end{array}
		\right.
		\end{equation*} where $v_1$ and $v_2$ are the initial and terminal vertices of any  lifting of $e$, respectively.
		Note that $(\psi,\rho)$ and $(g \psi, g\rho g^{-1})$ determine the same $\sigma$ and $c$.
	\end{rmk}

	We now construct a pseudo-developing map $(\Dcal,\rho)$ from  a decoration $(\psi,\rho)$. For a non-zero $C=(c_1,c_2)^t \in \Cbb^2$ let $h(C)=c_1/c_2 \in \Cbb\cup\{\infty\}=\partial \Hb$.
	We first define a map $\Dcal : \widehat{N} \rightarrow \Hb$ on each vertex $v$ of $\widehat{N}$ by \begin{equation}\label{eqn:dpsi} \Dcal(v) = h\left(  \psi(x) \right)\end{equation}  where $x \in \widetilde{N}^0$ is arbitrarily chosen in the link of $v$. The well-definedness of $\Dcal$ follows from the fact that $h(C_1)=h(C_2)$ if and only if $\textrm{det}(C_1,C_2)=0$ for non-zero $C_1$ and $C_2 \in \Cbb^2$. Also, recall the third condition in the definition of a decoration. Furthermore, the first and second conditions of a decoration guarantee the first and third conditions in Definition \ref{def:psd}, respectively. Now we extend $\Dcal$ over the higher dimensional cells in order. See \cite[\S 4.5]{culler1983varieties}. Such an extension is unique up to the equivalence relation. This defines a correspondence 
	\begin{equation}\label{eqn:surj}\left \{ \textrm{decorations } (\psi,\rho)\right\}/_\sim \rightarrow \left \{ \begin{array}{c} \textrm{ pseudo-developing}\\   \textrm{ maps }  (\Dcal,\rho)  \end{array}\right\}/_{\sim} \end{equation} by sending $[\psi,\rho]$ to $[\Dcal,\rho]$.
	\begin{prop} The above correspondence $[\psi,\rho] \mapsto [\Dcal,\rho]$ is surjective.
	\end{prop}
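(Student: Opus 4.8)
The plan is to invert the construction of the correspondence (\ref{eqn:surj}): given an arbitrary pseudo-developing map $(\Dcal,\rho)$, I will produce a decoration $(\psi,\rho)$ whose image is $[\Dcal,\rho]$. The heart of the matter is to lift the boundary values $\Dcal(v)\in\partial\Hb$ to honest non-zero vectors of $\Cbb^2$ in a $\rho$-equivariant fashion. The single tool I would lean on is that $h$ intertwines the linear $\sl$-action on $\Cbb^2\setminus\{0\}$ with the M\"obius action on $\partial\Hb=\Cbb\cup\{\infty\}$, that is, $h(gC)=g\boldsymbol{\cdot} h(C)$ for all $g\in\sl$ and non-zero $C$. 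This identity is the bridge that will convert equivariance of $\Dcal$ into equivariance of $\psi$.

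First I would use that the deck action of $\pi_1(N)$ on $\widetilde{N}^0$ is free, since a non-trivial deck transformation fixes no point of $\widetilde{N}$. Hence I may choose one representative per orbit of $\widetilde{N}^0$; there are only finitely many, as $N$ has finitely many vertices. For each representative $x$ I would pick any non-zero $\psi(x)\in\Cbb^2$ with $h(\psi(x))=\Dcal(\bar{x})$, where $\bar{x}$ denotes the vertex of $\widehat{N}$ onto which the boundary component through $x$ collapses; such a lift always exists, including when $\Dcal(\bar{x})=\infty$. I would then define $\psi$ on the whole orbit by $\psi(\gamma\boldsymbol{\cdot} x)=\rho(\gamma)\,\psi(x)$, which is unambiguous precisely because the action is free, and which makes $\psi$ $\rho$-equivariant by construction.

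Next I would check that $(\psi,\rho)$ is a genuine decoration. Combining equivariance of $\psi$, the intertwining property of $h$, and equivariance of $\Dcal$, one gets $h(\psi(y))=\Dcal(\bar{y})$ for \emph{every} $y\in\widetilde{N}^0$, not merely the chosen representatives. From this the two determinant conditions drop out of the axioms in Definition~\ref{def:psd}: two vertices joined by a short-edge lie on a common boundary component, hence have the same image $\bar{\cdot}$ and the same $h$-value, forcing $\textrm{det}(\psi(v_1),\psi(v_2))=0$; two vertices joined by a long-edge project to an edge of $\widehat{N}$, on whose endpoints $\Dcal$ takes distinct values by the third axiom of a pseudo-developing map, forcing $\textrm{det}(\psi(v_1),\psi(v_2))\neq0$.

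Finally, to identify the image $[\psi,\rho]$ with $[\Dcal,\rho]$, I would note that the decoration just built assigns to each vertex $v$ of $\widehat{N}$ the value $h(\psi(x))=\Dcal(v)$, so the pseudo-developing map associated with $(\psi,\rho)$ via (\ref{eqn:dpsi}) agrees with $\Dcal$ on all vertices of $\widehat{N}$; since the extension over higher-dimensional cells is unique up to the equivalence relation (cf.\ \cite[\S 4.5]{culler1983varieties}), the two pseudo-developing maps are equivalent, which establishes surjectivity. I expect the only genuinely delicate point to be the very first step, namely arranging the lift to be simultaneously projectively correct and $\rho$-equivariant; what makes it painless is the freeness of the deck action, which dissolves any compatibility constraint that the peripheral subgroups might otherwise impose, after which every remaining verification is a formal consequence of the intertwining identity for $h$.
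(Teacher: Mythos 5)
Your proof is correct and follows essentially the same route as the paper's: the paper likewise invokes freeness of the $\pi_1(N)$-action on $\widetilde{N}^0$ to produce a $\rho$-equivariant lift $\psi$ satisfying equation (\ref{eqn:dpsi}) and then observes that the decoration conditions hold automatically. Your write-up merely fills in the details the paper leaves implicit (orbit representatives, the intertwining identity $h(gC)=g\boldsymbol{\cdot}h(C)$, and the two determinant checks via the short-edge/long-edge dichotomy), all of which are accurate.
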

	\begin{proof}
		 Let $(\Dcal,\rho)$ be a pseudo-developing map. Since $\pi_1(N)$ acts freely on $\widetilde{N}^0$, there exists a  $\rho$-equivariant assignment $\psi:\widetilde{N}^0\rightarrow \Cbb^2$ satisfying the equation (\ref{eqn:dpsi}) for every pair of a vertex $v$ of $\widehat{N}$ and $x \in \widetilde{N}^0$ contained in the link of $v$. Then the pair $(\psi,\rho)$ should be automatically a decoration, so the correspondence (\ref{eqn:surj}) is surjective.		
	\end{proof}

	Summing up all the correspondences (\ref{eqn:firstcor}), (\ref{eqn:secondcor}), and (\ref{eqn:surj}), we obtain
	\begin{equation*} 
	\begin{array}{l} \displaystyle\coprod_{\sigma} P_\sigma(\Tcal) \hspace{0.8em} \overset{1:1}{\longleftrightarrow} \hspace{0.8em}
	\left \{ \begin{array}{c} \textrm{natural cocycles }\\\phi \in Z(N;\sl)\end{array}\right\}\\[15pt]
	\twoheadrightarrow \left \{ \textrm{decorations } (\psi,\rho)\right\}/_{\sim} \twoheadrightarrow 	\left \{ \begin{array}{c} \textrm{ pseudo-developing}\\   \textrm{ maps }  (\Dcal,\rho)  \end{array}\right\}/_{\sim}.
	\end{array}
	 \end{equation*} Whenever we choose $c\in P_\sigma(\Tcal)$, each ideal tetrahedron $\Delta_j$ of $\Tcal$ admits a non-degenerated hyperbolic structure.
	 
 \begin{prop}\label{lem:crossratio}  The cross-ratio $r(\Delta_j,l_3)$ of $\Delta_j$ at the edge $l_3$ is   \begin{equation}\label{eqn:crr}
 	r(\Delta_j,l_3)=\dfrac{\sigma(s_{12})\sigma(s_{45})}{\sigma(s_{24})\sigma(s_{51})}\,\dfrac{c(l_1) c(l_4)}{c(l_2)c(l_5)}
 	\end{equation} where $l_i$'s denote the edges of $\Delta_j$  as in Figure~\ref{fig:crossratio} and $s_{ik}$ denotes the short-edge joining  from $l_i$ to $l_k$.
 \end{prop}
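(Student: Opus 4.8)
The plan is to compute the cross-ratio directly from the pseudo-developing map $(\Dcal,\rho)$ attached to $c \in P_\sigma(\Tcal)$ through the decoration $(\psi,\rho)$, using the determinant description of the developing images. First I would lift the truncated tetrahedron $\Delta_j$ to $\widetilde{N}$ and let $\psi_0,\psi_1,\psi_2,\psi_3 \in \Cbb^2$ be the values of $\psi$ at one corner of each of its four truncation triangles. By Definition~\ref{def:psd} and equation~(\ref{eqn:dpsi}), the four images of the ideal vertices under $\Dcal$ are $w_i = h(\psi_i) \in \partial \Hb$, and $r(\Delta_j,l_3)$ is the cross-ratio of these four points taken in the order determined by the edge $l_3$, computed with the formula $\frac{(v_0-v_3)(v_1-v_2)}{(v_0-v_2)(v_1-v_3)}$ recalled in the introduction.

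Next I would convert each difference $w_i - w_k$ into a determinant via the elementary identity $h(C)-h(C') = \textrm{det}(C,C')/(c_2 c_2')$ for $C=(c_1,c_2)^t$ and $C'=(c_1',c_2')^t$. Substituting this into the cross-ratio, each second-coordinate factor $(\psi_i)_2$ appears exactly once in the numerator and once in the denominator and therefore cancels, leaving a ratio of four determinants
\[
r(\Delta_j,l_3) = \frac{\textrm{det}(\psi_0,\psi_3)\,\textrm{det}(\psi_1,\psi_2)}{\textrm{det}(\psi_0,\psi_2)\,\textrm{det}(\psi_1,\psi_3)},
\]
in which only the four long edges $l_1,l_2,l_4,l_5$ are involved; the edge $l_3$ itself and its opposite edge $l_6$ do not appear.

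Then I would translate each determinant into a long-edge parameter. By the Remark expressing $\sigma$ and $c$ directly through $\psi$, one has $c(l)=\textrm{det}(\psi(v_1),\psi(v_2))$ when $v_1,v_2$ are the endpoints of the long edge $l$, and $\psi(v')=\sigma(e)\,\psi(v)$ along a short edge $e$. The subtlety is that each ideal vertex is met by three of the long edges at three \emph{distinct} corners of its truncation triangle, so the representatives $\psi_0,\psi_1,\psi_2,\psi_3$ used above do not simultaneously coincide with the endpoints of $l_1,l_2,l_4,l_5$. Relating the two choices along the short edges of the truncation triangles multiplies the naive contribution $c(l_1)c(l_4)/(c(l_2)c(l_5))$ by precisely $\sigma(s_{12})\sigma(s_{45})/(\sigma(s_{24})\sigma(s_{51}))$, which yields equation~(\ref{eqn:crr}).

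I expect the last step to be the main obstacle. It is essentially bookkeeping, but it requires fixing an explicit labeling of the four truncation triangles and their corners against Figure~\ref{fig:crossratio} and recording, for each vertex, the product of $\sigma$-values along the short edges connecting the corner used in the numerator determinant to the corner used in the denominator determinant. Obtaining the exact combination $\sigma(s_{12})\sigma(s_{45})/(\sigma(s_{24})\sigma(s_{51}))$, rather than its inverse or some permuted variant, depends on orienting each short edge consistently with the convention $\sigma(e)\sigma(-e)=1$ and on matching these orientations to the chosen order $v_0,v_1,v_2,v_3$ of the vertices at $l_3$.
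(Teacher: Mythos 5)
Your proposal is correct and would yield equation~(\ref{eqn:crr}), but it takes a genuinely different route from the paper. The paper's proof normalizes $\phVT(x_1)=I$ and computes the four developing images explicitly by matrix products along edge paths, obtaining $\Dcal(v_1)=\infty$, $\Dcal(v_2)=-c(s_{23})/\sigma(s_{23})$, $\Dcal(v_3)=0$, $\Dcal(v_4)=\sigma(s_{34})c(s_{34})$; the cross-ratio then comes out in terms of the two \emph{short-edge} parameters $c(s_{23})$, $c(s_{34})$, which are eliminated via Proposition~\ref{lem:triangle} together with the cocycle relation $\sigma(s_{23})\sigma(s_{34})=\sigma(s_{24})$. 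You instead never introduce short-edge parameters: you write the cross-ratio projectively as the determinant ratio $\det(\psi_0,\psi_3)\det(\psi_1,\psi_2)/\bigl(\det(\psi_0,\psi_2)\det(\psi_1,\psi_3)\bigr)$ and convert each determinant into a long-edge Ptolemy coordinate using the Remark ($c(l)=\det(\psi(v_1),\psi(v_2))$ and $\psi(v')=\sigma(e)\psi(v)$ along short edges), with the $\sigma$-prefactor arising from bilinearity of the determinant under the short-edge rescalings. This is sound: the corner discrepancy you flag is exactly right, and since the $\sigma$-cocycle condition holds on each truncation triangle, the correction at each ideal vertex is path-independent and reduces to a single $\sigma(s_{ik})^{\pm 1}$, producing $\sigma(s_{12})\sigma(s_{45})/\bigl(\sigma(s_{24})\sigma(s_{51})\bigr)$. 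Your approach buys symmetry (all four vertices treated on an equal footing, no normalization, and it generalizes the classical determinant proof of the cross-ratio formula for undeformed Ptolemy coordinates, cf. \cite[Lemma 3.15]{zickert2009volume}), while the paper's buys brevity by reusing Proposition~\ref{lem:triangle}. Two minor points to tidy up in a full write-up: the intermediate identity $h(C)-h(C')=\det(C,C')/(c_2c_2')$ requires nonvanishing second coordinates, so either verify the determinant ratio directly as the projective cross-ratio (valid even when some $w_i=\infty$) or normalize away from $\infty$; and the orientation bookkeeping you defer is genuinely where all the content of the sign and exponent pattern lives, so it must be carried out against Figure~\ref{fig:crossratio} rather than asserted.
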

\begin{figure}[!h]
	\centering
	\scalebox{1}{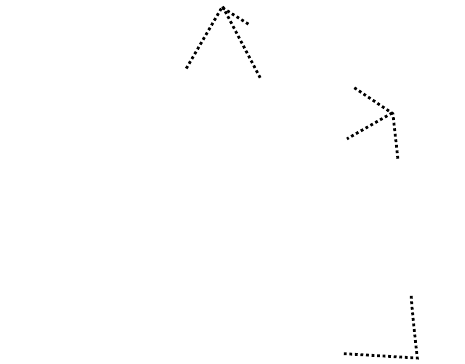} 
	\caption{An ideal tetrahedron with its truncation}
	\label{fig:crossratio}
\end{figure}	
 \begin{proof} We choose any lifting of $\Delta_j$ in $\widetilde{N}$ and identify it with its developing image. We denote its vertices by $v_1,\cdots,v_4$ as in Figure~\ref{fig:crossratio}. We choose a vertex $x_i \in \widetilde{N}^0$ in the link of $v_i$ as in Figure~\ref{fig:crossratio}. We may assume $\phVT(x_1)=I$, so $\Dcal(v_1)=h(\psi(x_1))=h\binom{1}{0}=\infty$. From the equation (\ref{eqn:v}), we have $$\phVT(x_2)= \phVT(x_1)\scalefont{1}{\arraycolsep=4pt \begin{pmatrix} \sigma(s_{23}) & c(s_{23})  \\[3pt] 0 & \sigma(s_{23})^{-1} \end{pmatrix}^{\mkern -7mu -1} \mkern -5mu \arraycolsep=2pt\begin{pmatrix} 0 & -c(l_2)^{-1} \\[3pt] c(l_2) & 0 \end{pmatrix}^{\mkern -7mu -1}} \mkern -5mu = \arraycolsep=4pt\begin{pmatrix} c(s_{23})c(l_2) & * \\[3pt] -\sigma(s_{23}) c(l_2) & * \end{pmatrix}$$ and $\Dcal(v_2)=\frac{c(s_{23})}{-\sigma(s_{23})}$. Similarly, we obtain $\Dcal(v_3)=0$ and $\Dcal(v_4)=\sigma(s_{34}) c(s_{34})$. Then the cross-ratio $r(\Delta_j,l_3)$ is given by $$\big[\Dcal(v_3): \Dcal(v_1) : \Dcal(v_4) : \Dcal(v_2) \big]=\frac{c(s_{23})}{-\sigma(s_{23})\sigma(s_{34})c(s_{34})}=-\frac{c(s_{23})}{\sigma(s_{24}) c(s_{34})}.$$ Recall that the cross-ratio $[A:B:C:D]$ means $\frac{(A-D)(B-C)}{(A-C)(B-D)}$. The  equation (\ref{eqn:crr}) is obtained from the above equation by replacing $c(s_{23})$ and $c(s_{34})$ through Proposition~\ref{lem:triangle} and the use of cocycle condition for $\sigma$. 
 \end{proof}

	\begin{rmk} \label{rmk:gluging} These cross-ratios automatically satisfy the gluing equations for $\Tcal$, i.e., the product of the cross-ratios around each edge of $\Tcal$ is equal to $1$. Furthermore, they are invariant under the isomorphism $\Phi$.
	\end{rmk}
	 	
\section{The complex volume of a Dehn-filled manifold}\label{sec:flattening3}

Let $N$ be an oriented compact 3-manifold with non-empty boundary. We denote the components of $\partial N$ by $\Sigma_1,\cdots, \Sigma_h$ and assume that each component $\Sigma_j$ is a torus with a fixed meridian $\mu_j$ and longitude $\lambda_j$. 
For $\kappa=(r_1,s_1,\cdots,r_h,s_h)$ we denote by $N_\kappa$ the manifold  obtained from $N$ by performing the Dehn filling that kills the curve $r_j\mu_j + s_j \lambda_j$ on each $\Sigma_j$, where $(r_j,s_j)$ is either a pair of coprime integers or the symbol $\infty$ meaning that we do not fill $\Sigma_j$. 

\subsection{Results of Neumann}\label{sec:neum}

In this subsection, we briefly recall some results of Neumann \cite{neumann2004extended} that we need for our main theorem (Theorem \ref{thm:main1}).

Let $\Delta$ be an ideal tetrahedron with the cross-ratio $z \in \Cbb \setminus \{0,1\}$.  Recall that the cross-ratio parameter at each edge of $\Delta$ is given by one of $z, z'=\frac{1}{1-z}$, and $z''=1-\frac{1}{z}$ as in Figure~\ref{fig:logpara}~(left).
A \emph{flattening} of $\Delta$ is a triple $\alpha=(\alpha^0,\alpha^1,\alpha^2)\in \Cbb^3$ of the form
	\begin{equation*}
	\left\{
	\begin{array}{rcl}
	\alpha^0 &=&\textrm{log}\,z+p\pi i \\[1pt]
	\alpha^1 &=&-\textrm{log}\,(1-z)+q\pi i\\[1pt]
	\alpha^2 &=&-\textrm{log}\,z+\textrm{log}\,(1-z)-(p+q)\pi i
	\end{array}
	\right.
	\end{equation*} for some integers $p$ and $q \in \Zbb$. Here and throughout the paper, we fix a branch of the logarithm; for actual computation we will use the principal branch having the imaginary part in the interval $(-\pi,\pi]$. 
	One may alternatively define a flattening of $\Delta$ by a triple $\alpha =(\alpha^0,\alpha^1,\alpha^2)\in \Cbb^3$ satisfying  
	\begin{equation*}
	\left\{
	\begin{array}{l}
	\alpha^0+\alpha^1+\alpha^2=0 \\[1pt]
	\alpha^0 \equiv \textrm{log}\, z$, $\alpha^1 \equiv \textrm{log}\, z',\, \alpha^2 \equiv \textrm{log} \, z'' \quad \textrm{ (mod }\pi i).
	\end{array}
	\right.
	\end{equation*}
We refer to the complex numbers $\alpha^0,\, \alpha^1,$ and $\alpha^2$ as \emph{log-parameters} and assign each of them to an edge of $\Delta$ accordingly as in Figure \ref{fig:logpara}. Note that a flattening $\alpha=(\alpha^0,\alpha^1,\alpha^2)$ determines and is determined by another triple $(z;p,q)$, $z \in \Cbb \setminus\{0,1\}$ and $(p,q)\in \Zbb^2$. See \cite[Lemma 3.2]{neumann2004extended}. Thus we may write the flattening $\alpha$ in either way: $(\alpha^0,\alpha^1,\alpha^2)$ or $(z;p,q)$. 

\begin{figure}[!h]
	\centering
	\scalebox{1}{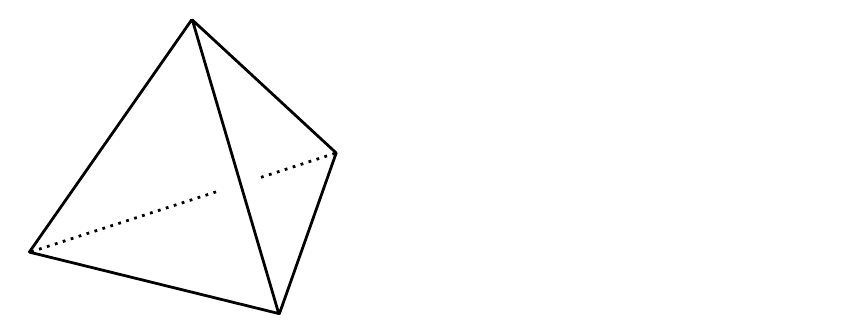} 
	\caption{Cross-ratio and log-parameters}
	\label{fig:logpara}
\end{figure}	

Let $\Tcal$ be an ideal triangulation of the interior of $N$ with ideal tetrahedra $\Delta_1,\cdots, \Delta_n$.
Following \cite{neumann2004extended}, we assume that each tetrahedron $\Delta_j$ of $\Tcal$ has a vertex-ordering so that these orderings agree on the common faces. 
We say that $\Delta_j$ is \emph{positively oriented} if the orientation of $\Delta_j$ induced from the vertex-ordering agrees with the orientation of $N$, and $\Delta_j$ is \emph{negatively oriented}, otherwise. We let $\epsilon_j = \pm1$ according to this orientation of $\Delta_j$. See also \cite[Definition 5.5, Remark 5.6]{zickert2009volume}.

A closed path in the interior of $N$ is called a \emph{normal path} if it meets no edges of any $\Delta_j$ and crosses faces only transversally. 
When a normal path passes through $\Delta_j$, we may assume that up to homotopy it enters and
departs at different faces of $\Delta_j$ so that there is a unique edge of $\Delta_j$ between
these faces. See, for instance, Figure \ref{fig:normalcocy}. We say that the path \emph{passes} this edge. By \emph{the sum of  log-parameters along a normal path}, we mean the signed-sum of log-parameters over all edges that the path passes. We refer \cite{neumann2004extended} for the signed-sum convention.	 
In particular, when a normal path winds an edge of $\Tcal$ as in Figure \ref{fig:edgecocy}, we call such sum  \emph{the sum of log-parameters around the edge}. 

\begin{thm}\label{thm:neumann}\cite[Theorem 14.7]{neumann2004extended} Let $N_\kappa$ be a Dehn-filled manifold obtained from $N$. Then for any collection of flattenings $\alpha_j$ of  $\Delta_j$ satisfying
	\begin{itemize}
		\item parity condition : parity along each normal path is zero;
		\item edge condition : the sum of log-parameters around each edge of $\Tcal$ is zero;
		\item cusp condition : the sum of log-parameters along any normal path in the neighborhood of an ideal vertex of $\Tcal$ that represents an unfilled cusp  is zero;
		\item Dehn-filling condition : the sum of log-parameters along any normal path in the neighborhood of an ideal vertex of $\Tcal$ that represents a filled cusp is zero if the path is null-homotopic in the added torus,
	\end{itemize} we obtain the induced representation $\rho : \pi_1(N_\kappa)\rightarrow \psl$ and \begin{equation}\label{eqn:voleq} i \textrm{Vol}_\Cbb(\rho) \equiv \sum_{j=1}^n \epsilon_j R(\alpha_j) \quad \textrm{mod }\pi^2 \Zbb \end{equation} 
	where $R$ denotes the extended Rogers dilogarithm defined by
	\begin{equation} \label{eqn:r}
	R(z;p,q)=\textrm{Li}_2(z)+ \frac{\pi i}{2}( p\, \textrm{log}\,(1-z)+ q\, \textrm{log}\,z )+\frac{1}{2} \textrm{log}\,(1-z)\,\textrm{log} \, z - \frac{\pi^2}{2}.
	\end{equation}
\end{thm}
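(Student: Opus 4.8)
The plan is to recast the combinatorial data inside Neumann's extended Bloch group and then invoke the fact that the extended Rogers dilogarithm computes the Cheeger--Chern--Simons class. A flattening $\alpha_j=(z_j;p_j,q_j)$ is precisely a generator $[z_j;p_j,q_j]$ of the extended pre-Bloch group $\widehat{\mathcal{P}}(\Cbb)$ — the free abelian group on such triples modulo the lifted five-term relation. So the first step is to form the chain
\[ \beta := \sum_{j=1}^n \epsilon_j\,[z_j;p_j,q_j] \in \widehat{\mathcal{P}}(\Cbb), \]
the sign $\epsilon_j$ accounting for the orientation of $\Delta_j$. The extended Rogers dilogarithm descends to a well-defined homomorphism $R:\widehat{\mathcal{P}}(\Cbb)\to\Cbb/\pi^2\Zbb$ — this is exactly where the lifted five-term relation is used — so the right-hand side of (\ref{eqn:voleq}) is literally $R(\beta)$. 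It therefore suffices to show that $\beta$ lies in the extended Bloch group $\widehat{\mathcal{B}}(\Cbb)=\ker\widehat{\nu}$ and that, under Neumann's isomorphism $\widehat{\mathcal{B}}(\Cbb)\cong H_3(\psl;\Zbb)$, the class $\beta$ is the image $\rho_*[N_\kappa]$ of the fundamental class.

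Next I would verify that $\beta\in\widehat{\mathcal{B}}(\Cbb)$, i.e. that the defect map $\widehat{\nu}:\widehat{\mathcal{P}}(\Cbb)\to\Cbb\wedge_\Zbb\Cbb$, which records the pair of log-parameters of each tetrahedron as a wedge, annihilates $\beta$. Writing $\widehat{\nu}(\beta)$ as a sum over tetrahedra and regrouping the terms according to the edges of $\Tcal$ to which the log-parameters are attached, the contribution splits into a part coming from the actual cross-ratios and a part recording the flattening integers $p_j,q_j$. The edge condition says exactly that the sum of log-parameters around every edge of $\Tcal$ vanishes, killing the edge contributions; the cusp and Dehn-filling conditions kill the remaining contributions around the ideal vertices; and the parity condition guarantees that the mod-$2$ ambiguity in choosing the log-parameters is consistent, so that the wedge is integral and well-defined. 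This is a bookkeeping computation essentially identical to Neumann's original cusped case, with the filled vertices handled by the Dehn-filling condition in place of the cusp condition.

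The heart of the argument is the identification of $\beta$ with $\rho_*[N_\kappa]\in H_3(\psl;\Zbb)$. The ideal triangulation $\Tcal$ together with the developing data yields a (degenerate) ideal triangulation of $N_\kappa$: over each unfilled cusp the truncated vertices assemble into the boundary torus, while over each filled cusp the Dehn-filling condition ensures that the normal paths null-homotopic in the attached solid torus bound, so that $\rho$ descends from $\pi_1(N)$ to $\pi_1(N_\kappa)$ and the ideal cells, coned off over the added cores, represent the genuine fundamental class $[N_\kappa]$. Pushing this cycle into the homogeneous bar complex of $\psl$ via the decorated developing map of Section \ref{sec:PDM} and comparing with Neumann's explicit description of $\widehat{\mathcal{B}}(\Cbb)\cong H_3(\psl;\Zbb)$ identifies its class with $\beta$. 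Since $R$ represents the Cheeger--Chern--Simons class and $i\,\textrm{Vol}_\Cbb(\rho)$ is by definition its evaluation on $\rho_*[N_\kappa]$, we conclude $i\,\textrm{Vol}_\Cbb(\rho)\equiv R(\beta)=\sum_j\epsilon_j R(\alpha_j)$ modulo $\pi^2\Zbb$.

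I expect the main obstacle to be precisely this last identification in the Dehn-filled setting: one must check that the combinatorial cycle built from the triangulation of the \emph{unfilled} $N$, after imposing only the Dehn-filling condition on the filled vertices, genuinely represents the fundamental class of the closed-up manifold $N_\kappa$ and not merely that of $N$, with no boundary term lost across the added solid tori. Making this rigorous requires a careful local analysis near each filled ideal vertex, showing that the Dehn-filling condition is the exact logarithmic shadow of the added meridian disk becoming trivial; this is the step that truly goes beyond the boundary-parabolic case. The remaining verifications — well-definedness of $R$ on $\widehat{\mathcal{P}}(\Cbb)$ and the edge/parity bookkeeping — are routine once this framework is in place.
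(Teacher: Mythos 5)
The paper itself gives no proof of this statement---it is Neumann's Theorem~14.7, quoted verbatim with the reader referred to \cite{neumann2004extended} for details---so the only meaningful comparison is with Neumann's original argument, and your sketch reproduces exactly its architecture: form $\beta=\sum_j \epsilon_j[z_j;p_j,q_j]$ in the extended pre-Bloch group $\widehat{\mathcal{P}}(\Cbb)$, use the lifted five-term relation to get $R$ well defined modulo $\pi^2$, kill $\widehat{\nu}(\beta)$ via the edge/cusp/Dehn-filling conditions, and identify $\beta$ with the image of the fundamental class under $\widehat{\mathcal{B}}(\Cbb)\cong H_3(\psl;\Zbb)$, on which $R$ evaluates the Cheeger--Chern--Simons class; you also correctly isolate the fundamental-class identification at filled cusps as the genuinely hard step, which is indeed the technical heart of Neumann's Sections 10--14. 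Two small cautions: when some $(r_j,s_j)=\infty$ the manifold $N_\kappa$ has boundary, so ``$\rho_*[N_\kappa]$'' must be interpreted via the relative fundamental class (this is where the boundary-parabolicity hypothesis enters), and the parity condition is not what makes $\widehat{\nu}$ or the wedge well defined---its only role is sharpening the congruence from modulo $\frac{1}{2}\pi^2\Zbb$ to modulo $\pi^2\Zbb$, which is precisely Neumann's Lemma~11.3 as exploited in the proof of Theorem~\ref{thm:main1} of this paper.
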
 
We refer \cite{neumann2004extended} for details including the parity condition.

\subsection{Flattenings}

Let $\sigma \in Z^1(\partial N; \Cbb^\times)$ and $c \in P_\sigma(\Tcal)$. In order to consider log-parameters, we consider an edge of $\Tcal$ without its orientation. However, the vertex-ordering endows each unoriented edge $l$ with an orientation, so $c(l)$ is well-defined without sign-ambiguity.

Recall Proposition \ref{lem:crossratio} that if $\Delta_j$ is positively oriented,
 \begin{equation}\label{eqn:shape}
 \left\{
 \begin{array}{rcl}
 z_j(c)&=&\pm\dfrac{\sigma(s_{12}) \sigma(s_{45})}{\sigma (s_{24}) \sigma (s_{51})} \,\dfrac{c(l_1)c(l_4)}{c(l_2)c(l_5)} \\[10pt]
 z_j'(c)&=&\pm \dfrac{\sigma(s_{53}) \sigma(s_{26})}{\sigma (s_{32}) \sigma (s_{65})}\, \dfrac{c(l_2)c(l_5)}{c(l_3)c(l_6)}\\[10pt]
 z_j''(c)&=& \pm \dfrac{\sigma(s_{64}) \sigma(s_{31})}{\sigma (s_{43}) \sigma (s_{16})} \dfrac{c(l_3)c( l_6)}{c(l_1)c(l_4)}
 \end{array}
 \right.
 \end{equation} and if $\Delta_j$ is negatively oriented,
 \begin{equation}\label{eqn:shape2}
 \left\{
 \begin{array}{rcl}
 z_j(c)&=&\pm\dfrac{\sigma (s_{24}) \sigma (s_{51})}{\sigma(s_{12}) \sigma(s_{45})} \,\dfrac{c(l_2)c(l_5)}{c(l_1)c(l_4)} \\[10pt]
 z_j'(c)&=& \pm \dfrac{\sigma (s_{43}) \sigma (s_{16})}{\sigma(s_{64}) \sigma(s_{31})} \dfrac{c(l_1)c(l_4)}{c(l_3)c( l_6)} \\[10pt] 
 z_j''(c)&=&\pm \dfrac{\sigma (s_{32}) \sigma (s_{65})}{\sigma(s_{53}) \sigma(s_{26})}\, \dfrac{c(l_3)c(l_6)}{c(l_2)c(l_5)}
 \end{array}
 \right.
 \end{equation} where $l_1,\cdots,l_6$ are now regarded as unoriented edges. 
  Zickert showed that taking a ``logarithm'' of the above equations as in Section \ref{sec:intro2} gives a nice flattening. However, we can not directly apply it to our case, since it won't give a flattening. Remark that $\textrm{log} \circ \sigma : \partial N^1\rightarrow \Cbb$ may not be a cocycle (cf. Equations (\ref{eqn:flattening}) and (\ref{eqn:flattening2})). We therefore consider the followings sets:
\begin{talign*}
	\mathbb{A} &= \left\{ a \in  Z^1(\partial N ; \Cbb)\, | \, a \equiv \textrm{log} \circ \sigma  \,\,(\textrm{mod } \pi i)\right\} \\
 	\mathbb{B} &= \left\{ b=(b_1,\cdots,b_h)\, \Big|\, \begin{array}{l} b_j : \pi_1(\Sigma_j) \rightarrow \Cbb \textrm{ homomorphism} \\ \textrm{such that } b_j \equiv \textrm{log} \circ \sigma_{\Sigma_j}  \,\,(\textrm{mod } \pi i ) \end{array}\right\}
\end{talign*}
It is clear that $(a_{\Sigma_1}, \cdots, a_{\Sigma_h})\in\Bbb$ for all $a \in \Abb$. Recall that $a_{\Sigma_j} : \pi_1(\Sigma_j) \rightarrow \Cbb$ denotes the homomorphism induced from the cocycle $a \in \Abb$. The set $\Bbb$ can be identified with $\Zbb^{2h}$, where $(u_1,v_1,\cdots,u_h,v_h)\in\Zbb^{2h}$ corresponds to  $b=(b_1,\cdots,b_h)\in \Bbb$ given by  $$b_j(\mu_j)=\textrm{log}\,\sigma_{\Sigma_j}(\mu_j)+ u_j \pi i \textrm{ and } b_j(\lambda_j)=\textrm{log}\,\sigma_{\Sigma_j}(\lambda_j)+ v_j \pi i $$ for all $1 \leq j\leq h$. Recall that $\pi_1(\Sigma_j)$ is an abelian group generated by $\mu_j$ and $\lambda_j$.
\begin{prop} The map  $\iota:\Abb \rightarrow \Bbb,\ a \mapsto (a_{\Sigma_1},\cdots,a_{\Sigma_h})$ is surjective. In particular, $\Abb$ is non-empty.
\end{prop}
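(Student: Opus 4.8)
The plan is to reduce the statement to a single boundary torus and then split it into an existence step and a surjectivity-of-the-linear-part step. Since $\partial N=\coprod_j \Sigma_j$ and no edge joins distinct components, we have $Z^1(\partial N;-)=\prod_j Z^1(\Sigma_j;-)$, and correspondingly $\Abb=\prod_j \Abb_j$ and $\Bbb=\prod_j \Bbb_j$ with $\iota=\prod_j \iota_j$, where $\Abb_j=\{a\in Z^1(\Sigma_j;\Cbb)\mid a\equiv \textrm{log}\circ\sigma \ (\textrm{mod }\pi i)\}$ and $\iota_j(a)=a_{\Sigma_j}$. As a product of set maps with non-empty factors is surjective iff each factor is, it suffices to prove surjectivity of $\iota_j$ for a fixed torus $\Sigma=\Sigma_j$. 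I will use that $\Abb_j$ (once shown non-empty) is a torsor over $Z^1(\Sigma;\pi i\Zbb)$, since the difference of two elements of $\Abb_j$ is a cocycle congruent to $0$ modulo $\pi i$; likewise $\Bbb_j$ is a torsor over $\textrm{Hom}(\pi_1(\Sigma),\pi i\Zbb)$, and $\iota_j$ is an affine map whose linear part is $z\mapsto z_\Sigma$.

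The first (easy) step is to show that this linear part $Z^1(\Sigma;\pi i\Zbb)\rightarrow \textrm{Hom}(\pi_1(\Sigma),\pi i\Zbb)$, $z\mapsto z_\Sigma$, is surjective. This is immediate: passing from a cocycle to its induced homomorphism is exactly passing to its cohomology class, and since $\Sigma$ is a torus the universal coefficient theorem gives $H^1(\Sigma;\pi i\Zbb)=\textrm{Hom}(H_1(\Sigma),\pi i\Zbb)=\textrm{Hom}(\pi_1(\Sigma),\pi i\Zbb)$, every class being represented by a cocycle. Granting in addition that $\Abb_j\neq\emptyset$, surjectivity of $\iota_j$ then follows formally: choosing $a_0\in\Abb_j$ and any $b\in\Bbb_j$, the difference $b-\iota_j(a_0)$ lies in $\textrm{Hom}(\pi_1(\Sigma),\pi i\Zbb)$, so I may pick $z\in Z^1(\Sigma;\pi i\Zbb)$ with $z_\Sigma=b-\iota_j(a_0)$, and then $a_0+z\in\Abb_j$ satisfies $\iota_j(a_0+z)=\iota_j(a_0)+z_\Sigma=b$.

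The heart of the matter, and the step I expect to be the main obstacle, is the non-emptiness of $\Abb_j$: producing a single additive cocycle congruent to $\textrm{log}\circ\sigma$ modulo $\pi i$. The difficulty is exactly the one flagged just before the proposition, namely that $\textrm{log}\circ\sigma$ is a cochain but in general not a cocycle, because around a triangular face the condition $\prod_i \sigma(e_i)=1$ forces only $\sum_i \textrm{log}\,\sigma(e_i)\in 2\pi i\Zbb$ rather than $0$. I would therefore begin with the cochain $a^{(0)}(e):=\textrm{log}\,\sigma(e)$ (a fixed branch, made antisymmetric in $e$), so that $\delta a^{(0)}$ is a $2$-cocycle valued in $2\pi i\Zbb$, and look for a correction $g:\Sigma^1\rightarrow 2\pi i\Zbb$ with $\delta g=\delta a^{(0)}$; then $a:=a^{(0)}-g$ is a genuine cocycle with $a\equiv \textrm{log}\,\sigma \ (\textrm{mod }2\pi i)$, hence $a\in\Abb_j$.

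Such a $g$ exists precisely when $[\delta a^{(0)}]\in H^2(\Sigma;2\pi i\Zbb)$ vanishes, and this class is exactly the obstruction to lifting the multiplicative cocycle $\sigma$ to an additive one, i.e. the image of $[\sigma]\in H^1(\Sigma;\Cbb^\times)$ under the Bockstein map of $0\rightarrow 2\pi i\Zbb\rightarrow\Cbb\rightarrow\Cbb^\times\rightarrow 0$. It vanishes because $[\sigma]$ itself lifts to $H^1(\Sigma;\Cbb)$: under $H^1(\Sigma;A)=\textrm{Hom}(H_1(\Sigma),A)$, the homomorphism $\sigma_\Sigma:\pi_1(\Sigma)=\Zbb^2\rightarrow \Cbb^\times$ lifts to $\pi_1(\Sigma)\rightarrow\Cbb$ by sending the free generators $\mu,\lambda$ to chosen logarithms of their $\sigma$-values, $H_1(\Sigma)$ being free abelian. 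Hence the Bockstein is zero, the correction $g$ exists, and $\Abb_j\neq\emptyset$. Combining this with the two preceding steps shows that each $\iota_j$, and therefore $\iota=\prod_j\iota_j$, is surjective; since $\Bbb\cong\Zbb^{2h}\neq\emptyset$, surjectivity of $\iota$ also yields $\Abb\neq\emptyset$.
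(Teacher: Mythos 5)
Your proof is correct, but it takes a genuinely different route from the paper's. The paper argues by direct construction: given $b\in\Bbb$, it picks a spanning tree $T$ of the $1$-skeleton of each $\Sigma_j$, sets $a:=\log\sigma$ on tree edges, and defines $a$ on each remaining edge $e_0$ by $a(e_0):=b_j(\gamma)-a(e_1)-\cdots-a(e_m)$, where $\gamma$ is the fundamental cycle of $e_0$ in $T\cup\{e_0\}$; since these cycles form a fundamental cycle basis, $a$ is automatically a cocycle with $\iota(a)=b$, so surjectivity and non-emptiness of $\Abb$ come in one stroke. You instead split the problem into non-emptiness of $\Abb_j$ --- identifying the obstruction $[\delta(\log\sigma)]\in H^2(\Sigma_j;2\pi i\Zbb)$ with the Bockstein of $[\sigma]$ along $0\rightarrow 2\pi i\Zbb\rightarrow\Cbb\rightarrow\Cbb^\times\rightarrow 0$ and killing it because $\sigma_{\Sigma_j}$ lifts through $\exp$ ($\pi_1(\Sigma_j)\cong\Zbb^2$ being free abelian) --- plus surjectivity of the linear part $Z^1(\Sigma_j;\pi i\Zbb)\rightarrow\textrm{Hom}(\pi_1(\Sigma_j),\pi i\Zbb)$ via $H^1\cong\textrm{Hom}(H_1,-)$, and then the torsor structure converts these into surjectivity of the affine map $\iota_j$. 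Both arguments are sound (your construction even gives the stronger congruence mod $2\pi i$). What the paper's approach buys is explicitness and self-containedness: it manufactures a concrete preimage $a$ of any given $b$, which is precisely what is used in the computation of Example~\ref{ex:figeig2}, and it needs no comparison between polyhedral and singular cohomology --- a point your argument tacitly invokes twice (that the induced-homomorphism map $Z^1\rightarrow\textrm{Hom}(\pi_1,A)$ is the cohomology-class map, and that a class vanishing in $H^2$ is a coboundary of a polyhedral cochain); these are standard for the $\Delta$-complex structure that the truncated triangulation puts on $\partial N$, but you should state them. What your approach buys is conceptual clarity: it names the obstruction behind the paper's warning that $\log\circ\sigma$ need not be a cocycle, and it makes visible that nothing is used beyond $H_1(\Sigma_j)$ being free abelian, so the proposition holds verbatim for boundary components of arbitrary genus (as does the paper's tree argument, though less transparently).
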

\begin{proof} Let $b=(b_1,\cdots,b_h) \in \Bbb$. We define $a : \partial N^1 \rightarrow \Cbb$ on each component $\Sigma_j$ of $\partial N$ as follows. We choose a spanning tree $T$ on $\Sigma_j$. For each unoriented edge $e$ of $T$ we choose any orientation of $e$ and define $a(e) := \textrm{log}\,\sigma(e)$ and $a(-e) := -\textrm{log}\,\sigma(e)$. For an oriented edge $e_0$ of $\Sigma_j$ not in $T$ let $e_1,\cdots,e_m$ be oriented edges of $T$ such that together with $e_0$ they form a unique cycle $\gamma$ in $T \cup \{e_0\}$. We define $$a(e_0):=b_j(\gamma) - a(e_1) -\cdots -a(e_m).$$  Note that  $a(e_0) \equiv \textrm{log}\,\sigma_{\Sigma_j}(\gamma) - \textrm{log}\,\sigma(e_1)- \cdots \textrm{log}\,\sigma(e_m)\equiv \textrm{log}\,\sigma(e_0)$ in modulo $\pi i$. One can check that $a$ is a cocycle satisfying $\iota(a)=b\in\Bbb$ from the fact that the cycle $\gamma$ forms a fundamental cycle basis.
\end{proof}
 We define a flattening $\alpha_j(c,a)$ of each ideal tetrahedron $\Delta_j$ of $\Tcal$, depending on the choice of $c\in P_\sigma (\Tcal)$ and $a \in \Abb$, by defining log parameters $\alpha_{j}^{0},\alpha_{j}^{1}$, and $\alpha_{j}^{2}$ as follows. If $\Delta_j$ is positively oriented,
\begin{equation} \label{eqn:flattening}
\left\{
\begin{array}{rcl}
\alpha_{j}^{0} &=& \textrm{log}\,c(l_1) + \textrm{log}\,c(l_4) -\textrm{log}\,c(l_2)-\textrm{log}\,c(l_5)  \\[2pt]
& &\quad +a(s_{12})+a(s_{45})-a(s_{24})-a(s_{51}), \\[4pt]
\alpha_{j}^{1}&=& \textrm{log}\,c(l_2) + \textrm{log}\,c(l_5) -\textrm{log}\,c(l_3)-\textrm{log}\,c(l_6) \\[2pt]
& &\quad +a(s_{53})+a(s_{26})-a(s_{32})-a(s_{65}), \\[4pt]
\alpha_{j}^{2} &=& \textrm{log}\,c(l_3) + \textrm{log}\,c(l_6) -\textrm{log}\,c(l_1)-\textrm{log}\,c(l_4) \\[2pt]
&&\quad +a(s_{64})+a(s_{31})-a(s_{43})-a(s_{16})
\end{array}
\right.
\end{equation}   and if $\Delta_j$ is negatively oriented,
\begin{equation} \label{eqn:flattening2}
\left\{
\begin{array}{rcl}
\alpha_{j}^{0} &=& \textrm{log}\,c(l_2) + \textrm{log}\,c(l_5) -\textrm{log}\,c(l_1)-\textrm{log}\,c(l_4)  \\[2pt]
& &\quad +a(s_{24})+a(s_{51})-a(s_{12})-a(s_{45}), \\[4pt]
\alpha_{j}^{1} &=& \textrm{log}\,c(l_1) + \textrm{log}\,c(l_4) -\textrm{log}\,c(l_3)-\textrm{log}\,c(l_6) \\[2pt]
&&\quad +a(s_{43})+a(s_{16})-a(s_{64})-a(s_{31})\\[4pt]
\alpha_{j}^{2}&=& \textrm{log}\,c(l_3) + \textrm{log}\,c(l_6) -\textrm{log}\,c(l_2)-\textrm{log}\,c(l_5) \\[2pt]
& &\quad +a(s_{32})+a(s_{65})-a(s_{53})-a(s_{26})
\end{array}
\right.
\end{equation}  for Figure \ref{fig:crossratio}. Note that $\alpha_j(c,a)$ is indeed a flattening of $\Delta_j$, i.e. $\alpha^0_j+\alpha^1_j+\alpha^2_j=0$ and $\alpha_j^0 \equiv \textrm{log}\, z_j $, $\alpha_j^1 \equiv \textrm{log}\, z'_j$, $ \alpha_j^2 \equiv \textrm{log}\, z''_j$ in modulo $\pi i$,
  since $a\in\Abb$ is a cocycle that agrees with $\textrm{log} \circ \sigma$ in modulo $\pi i$.

Following Theorem \ref{thm:neumann} (cf. the equation (\ref{eqn:voleq})), we define the map
$$\Psi:P_\sigma(\Tcal) \times \Abb \rightarrow \Cbb/\pi^2\Zbb,\ (c,a)\mapsto \sum \epsilon_j \,R(\alpha_j(c,a)).$$
\begin{prop} \label{lem:indep}  $\Psi(c,a)=\Psi(c,a')$ if $\iota(a)=\iota(a')\in\Bbb$.
\end{prop}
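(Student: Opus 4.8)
The plan is to show that $a$ and $a'$ differ by a coboundary valued in $\pi i\Zbb$, to reduce to a coboundary supported at a single vertex, and then to verify that the resulting change of $\sum_j\epsilon_j R(\alpha_j(c,a))$ is the logarithm of a gluing-equation product, hence lies in $\pi^2\Zbb$. First I would set $\delta := a' - a$. Since $a,a'\in\Abb$ both reduce to $\log\circ\sigma$ modulo $\pi i$, the difference $\delta$ is a cocycle in $Z^1(\partial N;\pi i\Zbb)$, and the hypothesis $\iota(a)=\iota(a')$ says exactly that $\delta$ induces the trivial homomorphism on $\pi_1(\Sigma_k)$ for every boundary torus. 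A cocycle with values in $\pi i\Zbb$ whose periods vanish on each torus is a coboundary, so $a' = a + d\tau$ with $\tau\in C^0(\partial N;\pi i\Zbb)$, where $(d\tau)(e)=\tau(v_2)-\tau(v_1)$ for a short edge $e$ from $v_1$ to $v_2$. Decomposing $\tau$ into functions each supported at one vertex and adding the associated coboundaries one at a time keeps us inside $\Abb$ at every step (a cocycle plus a coboundary is a cocycle, and we only add integral multiples of $\pi i$). It therefore suffices to treat the case where $\tau$ takes the value $m\pi i$ at a single vertex $x\in\partial N^0$ and vanishes elsewhere, with $m\in\Zbb$.

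Next I would use that the cross-ratios $z_j$ depend only on $c$ (Proposition \ref{lem:crossratio}), so passing from $a$ to $a'$ leaves each $z_j$ fixed and only shifts the integer parameters of the flattenings by quantities read off from (\ref{eqn:flattening})--(\ref{eqn:flattening2}). Because $z_j$ is unchanged, the dilogarithm and the product term in (\ref{eqn:r}) cancel in the difference, leaving the clean identity $R(z;p+u,q+v)-R(z;p,q)=\tfrac{\pi i}{2}\bigl(u\log(1-z)+v\log z\bigr)$, equivalently $\tfrac12\bigl(\Delta\alpha^0\log(1-z)+\Delta\alpha^1\log z\bigr)$ in terms of the log-parameter increments. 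Writing each increment $\Delta\alpha^s_j$ as the signed sum of the values of $\delta$ on the short edges occurring in (\ref{eqn:flattening})--(\ref{eqn:flattening2}) and interchanging the order of summation, I would express the change of $\sum_j\epsilon_j R(\alpha_j(c,\cdot))$ as a sum over short edges of $\delta(e)$ times a coefficient $C_e$ that is a signed combination of $\log z_j$ and $\log(1-z_j)$ over the tetrahedra adjacent to $e$. Since $\delta$ vanishes on every short edge not meeting $x$, only the short edges incident to $x$ survive, and the change equals $\tfrac{m\pi i}{2}$ times a signed sum of the coefficients $C_e$ over those edges.

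Finally, $x$ is one endpoint of a unique long edge $l$, and the short edges meeting $x$ are exactly those running around $l$; the accompanying coefficients $C_e$ are precisely the log-cross-ratios of the successive tetrahedra at the edge $l$. Their signed sum is therefore the logarithm of the product of the cross-ratios around $l$, which equals $\log 1\in 2\pi i\Zbb$ by the gluing equations (Remark \ref{rmk:gluging}). Consequently the change lies in $\tfrac{m\pi i}{2}\cdot 2\pi i\Zbb=\pi^2\Zbb$, that is $\Psi(c,a)=\Psi(c,a')$ in $\Cbb/\pi^2\Zbb$.

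The hard part will be the bookkeeping in the middle step: one must match, consistently for positively and negatively oriented tetrahedra (so that the weights $\epsilon_j$ are absorbed correctly) and with a fixed branch of the logarithm, the short-edge coefficients $C_e$ at $x$ with the cross-ratios of the tetrahedra around $l$, and check that the branch discrepancies (for instance between $-\log(1-z)$ and $\log z'$, or the $\log(-1)$ arising for the $z''$-parameter) contribute only multiples of $\pi^2$ and so are invisible modulo $\pi^2\Zbb$. Once this combinatorial identification is in place, the gluing equation of Remark \ref{rmk:gluging} does the rest.
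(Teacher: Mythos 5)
Your proposal is correct and follows essentially the same route as the paper: reduce $a'-a$ to a coboundary supported at a single vertex $x$ (the paper takes $\theta(x_0)=\pi i$; your $\pi i\Zbb$-valued $\tau$ with value $m\pi i$ and your explicit vanishing-periods-implies-coboundary justification make this step slightly more careful), observe that only the flattenings of the tetrahedra around the unique long edge at $x$ shift, by $q_j\mapsto q_j+m$ with $z_j$ the shape at that edge, and kill the resulting $\frac{\pi i}{2}\sum_j\epsilon_j\,m\log z_j$ modulo $\pi^2\Zbb$ via the gluing equations of Remark \ref{rmk:gluging}. The bookkeeping you flag as the hard part is exactly the paper's unproved assertion ``$p'_j=p_j$ and $q'_j=q_j+1$,'' and it checks out as you describe.
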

\begin{proof} Since $a$ and $a'$ induce the same element of $\Bbb$, there exists $\theta \in  C^0(\partial N;\Cbb)$ satisfying $a'= a^\theta$. As in the proof of Proposition \ref{prop:isom}, we may assume that $\theta$ is trivial except on a single vertex $x_0$ and $\theta(x_0)=\pi i$.  Let $l_0$ be the long-edge of $N$ having $x_0$ as an endpoint, and $\Delta_1,\cdots,\Delta_m$ be the tetrahedra of $\Tcal$ containing $l_0$.  Let $\alpha_j(c,a)=(z_j;p_j,q_j)$ and $\alpha_j(c,a')=(z_j;p'_j,q_j')$ be the flattenings of $\Delta_j$ given by the equation (\ref{eqn:flattening}) or (\ref{eqn:flattening2}), where $z_j$ is the shape parameter of $\Delta_j$ at $l_0$. One can check that $p'_j=p_j$ and $q'_j=q_j+1$ for all $1 \leq j \leq m$. Therefore, we have \begin{equation*}\Psi(c,a')-\Psi(c,a)=\frac{\pi i}{2} \sum_{j=1}^m (\epsilon_j \, \textrm{log} \,z_j) \equiv \frac{\pi i}{2}\, \textrm{log} \prod_{j=1}^m z_j^{\epsilon_j}\equiv 0 \quad \textrm{mod } \pi^2 \Zbb.\end{equation*} For the last equality we use Remark \ref{rmk:gluging}.
\end{proof} We therefore obtain the induced map, also denoted by $\Psi$, $$\Psi : P_\sigma(\Tcal) \times \Bbb \rightarrow \Cbb/\pi^2 \Zbb$$ by defining $\Psi(c,b):=\Psi(c,a)$ for any $a\in\Abb$ such that $\iota(a)=b \in \Bbb$.

\subsection{Main theorem}

Recall that for $\kappa=(r_1,s_1,\cdots,r_h,s_h)$ the manifold $N_\kappa$ is obtained from $N$ by performing a Dehn filling that kills the curve $r_j\mu_j + s_j \lambda_j$ on each $\Sigma_j$, where $(r_j,s_j)$ is either a pair of coprime integers or the symbol $\infty$ meaning that we do not fill $\Sigma_j$. 

Suppose that we choose $c\in P_\sigma(\Tcal)$ such that the representation $\rho_c :\pi_1(N)\rightarrow \sl$ factors through $N_\kappa$ as a $\psl$-representation. If $N_\kappa$ has a boundary, i.e. $(r_i,s_i)=\infty$ for some $i$, then we also assume that the induced representation $\rho_c : \pi_1(N_\kappa) \rightarrow \psl$ is boundary parabolic so that the complex volume of $\rho_c$ is well-defined. This exactly happens when
\begin{equation*}
\left\{
\begin{array}{ll}
\textrm{tr}(\rho_c(\mu_j))=\pm 2,\,\, \textrm{tr}(\rho_c(\lambda_j))=\pm 2  & \textrm{if }(r_j,s_j)=\infty \\[2pt]
\rho_c (\mu_j^{r_j}\lambda_j^{s_j})= \pm I	& \textrm{if } (r_j,s_j) \neq \infty 
\end{array}
\right.
\end{equation*} and in this case, the equation (\ref{eqn:eign}) tells us that
\begin{equation*}
\left\{
\begin{array}{ll}
\sigma_{\Sigma_j}(\mu_j)=\pm 1,\,\, \sigma_{\Sigma_j}(\lambda_j)=\pm 1  & \textrm{for all }(r_j,s_j)=\infty \\[2pt]
\sigma_{\Sigma_j} (\mu_j^{r_j}\lambda_j^{s_j})= \pm 1	& \textrm{for all } (r_j,s_j) \neq \infty .
\end{array}
\right.
\end{equation*}
Therefore there exists an element $b=(b_1,\cdots,b_h)\in\Bbb$ satisfying
\begin{equation}\label{eqn:b}
\left\{
\begin{array}{ll}
b_j(\mu_j)=b_j(\lambda_j)=0 & \textrm{for all } (r_j,s_j)=\infty \\[2pt]
b_j(\mu_j^{r_j}\lambda_j^{s_j})=0		& \textrm{for all } (r_j,s_j) \neq \infty.
\end{array}
\right.
\end{equation}

%


\begin{thm}\label{thm:main1} Suppose that $\rho_c :\pi_1(N)\rightarrow \sl$ factors through a Dehn-filled manifold $N_\kappa$ as a $\psl$-representation and induces a boundary parabolic representation of $\rho_c :\pi_1(N_\kappa)\rightarrow \psl$. Then the complex volume of $\rho_c$ is given by \begin{equation}\label{eqn:mainthm} i \textrm{Vol}_\Cbb(\rho_c) \equiv \Psi (c,b) \quad  \textrm{ mod } \frac{1}{2}\pi^2\Zbb \end{equation} for $b=(b_1,\cdots,b_h) \in \Bbb$ satisfying the equation  (\ref{eqn:b}).
\end{thm}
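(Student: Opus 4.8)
The plan is to realize $\Psi(c,b)$ as the right-hand side of Neumann's formula (\ref{eqn:voleq}) for a genuine collection of flattenings. Fix any $a \in \Abb$ with $\iota(a) = b$ (such $a$ exists by surjectivity of $\iota$), and form the flattenings $\alpha_j(c,a)$ of (\ref{eqn:flattening})--(\ref{eqn:flattening2}). By Proposition \ref{lem:indep} the sum $\sum_j \epsilon_j R(\alpha_j(c,a))$ is independent of the chosen lift $a$ and equals $\Psi(c,b)$, so it is enough to check that $\{\alpha_j(c,a)\}$ satisfies the hypotheses of Theorem \ref{thm:neumann}: the shapes $z_j(c)$ induce the $\psl$-representation $\rho_c$, which by assumption descends to $\pi_1(N_\kappa)$, so once the conditions hold the left-hand side of (\ref{eqn:voleq}) is the sought complex volume.

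First I would dispatch the edge condition exactly as in Zickert's undeformed argument: summing the log-parameters around an edge of $\Tcal$, the $\textrm{log}\,c(l_i)$ contributions cancel in consecutive pairs as one runs through the tetrahedra cyclically glued around the edge (this is the logarithm of the gluing equation recorded in Remark \ref{rmk:gluging}), while the surviving short-edge terms $a(s_{ik})$ sum to zero because, around each end of the edge, they run over a null-homotopic loop in $\partial N$ and $a$ is a cocycle. For the cusp and Dehn-filling conditions the decisive computation is that, along a normal path representing $\gamma \in \pi_1(\Sigma_j)$ in the cusp cross-section, the $\textrm{log}\,c$ contributions again telescope away---the product of the cross-ratios around a closed cusp curve is the derivative $\sigma_{\Sigma_j}(\gamma)^{-2}$ of the holonomy at its fixed point, which is free of the $c$'s---leaving the signed sum of log-parameters equal to $-2\,a_{\Sigma_j}(\gamma) = -2\,b_j(\gamma)$ exactly. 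Hence the cusp condition at an unfilled torus ($\gamma = \mu_j, \lambda_j$) and the Dehn-filling condition at a filled torus ($\gamma = \mu_j^{r_j}\lambda_j^{s_j}$) are both equivalent to the vanishing statements recorded in (\ref{eqn:b}), which $b$ satisfies by hypothesis.

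The hard part will be the parity condition, which $\{\alpha_j(c,a)\}$ need not satisfy; this is already the crux in Zickert's unipotent setting and is compounded here by the sign ambiguities $\pm$ in (\ref{eqn:shape})--(\ref{eqn:shape2}). I would pass to a parity-corrected family $\{\tilde\alpha_j\}$, adding integer multiples of $\pi i$ to the log-parameters so as to force the parity condition while preserving the shapes $z_j$ and the edge, cusp, and Dehn-filling conditions already verified; Theorem \ref{thm:neumann} then gives $i\,\textrm{Vol}_\Cbb(\rho_c) \equiv \sum_j \epsilon_j R(\tilde\alpha_j) \pmod{\pi^2\Zbb}$. The remaining and most delicate step is to show that each elementary $\pi i$-adjustment changes $R$ only through the term $\frac{\pi i}{2}(p\,\textrm{log}(1-z) + q\,\textrm{log}\,z)$ of (\ref{eqn:r}), and that the edge and cusp relations constrain the total change to lie in $\frac{1}{2}\pi^2\Zbb$---this is precisely where one factor of two is lost relative to the undeformed case and where $\textrm{mod }\frac{1}{2}\pi^2\Zbb$, rather than $\textrm{mod }\pi^2\Zbb$, enters. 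Granting this, $\Psi(c,b) = \sum_j \epsilon_j R(\alpha_j(c,a)) \equiv \sum_j \epsilon_j R(\tilde\alpha_j) \equiv i\,\textrm{Vol}_\Cbb(\rho_c) \pmod{\frac{1}{2}\pi^2\Zbb}$, giving (\ref{eqn:mainthm}).
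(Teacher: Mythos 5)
Your reduction to Theorem \ref{thm:neumann} and your verification of its hypotheses track the paper's proof closely: your edge-condition argument is the paper's Claim 1 (the paper rewrites each log-parameter of (\ref{eqn:flattening})--(\ref{eqn:flattening2}) as two five-term face contributions and cancels everything pairwise using $a(s_{ij})=-a(s_{ji})$; your variant, cancelling the $a$-terms because they traverse a null-homotopic loop in $\partial N$ and $a$ is a cocycle, is an acceptable substitute), and your cusp/Dehn-filling computation is the paper's Claim 2, which likewise quotes the proof of Zickert's Theorem 6.5 for the cancellation of the $\textrm{log}\,c$-terms and finds that a normal path representing $\gamma$ picks up $2b_j(\gamma)$ (your sign $-2b_j(\gamma)$ is immaterial, since only the vanishing conditions (\ref{eqn:b}) are used).

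The genuine gap is your final step. The paper never corrects parity at all: it concludes by citing Lemma 11.3 of \cite{neumann2004extended}, which states precisely that flattenings satisfying the edge, cusp, and Dehn-filling conditions of Theorem \ref{thm:neumann} but not necessarily the parity condition still compute $i\,\textrm{Vol}_\Cbb(\rho_c)$ modulo $\frac{1}{2}\pi^2\Zbb$ --- this citation is the sole source of the weakened modulus in (\ref{eqn:mainthm}). Your plan of passing to a parity-corrected family $\tilde\alpha_j$ amounts to an attempt to reprove that lemma, and both of its nontrivial ingredients are left open in your write-up: (i) the existence of $\pi i$-adjustments that restore parity while preserving the edge, cusp, and Dehn-filling conditions is not automatic --- the parity defect is a class in $\textrm{Ker}\,(H^1(N;\Zbb/2) \rightarrow H^1(\partial N;\Zbb/2))$, cf.\ Remark \ref{rmk:parity}, and killing it by moves compatible with the other linear constraints requires argument; and (ii) since $R(z;p,q)$ depends on $(p,q)$ only through the linear term of (\ref{eqn:r}), the discrepancy is $\frac{\pi i}{2}\sum_j \epsilon_j\bigl(\Delta p_j\,\textrm{log}(1-z_j)+\Delta q_j\,\textrm{log}\,z_j\bigr)$, and the assertion that this lies in $\frac{1}{2}\pi^2\Zbb$ (equivalently, that the sum in parentheses lies in $\pi i\,\Zbb$ given the homogeneous edge and cusp relations on the $\Delta p_j,\Delta q_j$) is exactly the delicate content of Neumann's lemma; your ``granting this'' concedes the one step that actually produces the factor $\frac{1}{2}$. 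Since everything preceding this point is sound and identical in substance to the paper's Claims 1 and 2, the repair is immediate: replace the final paragraph by the citation of \cite[Lemma 11.3]{neumann2004extended}, as the paper does.
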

\begin{proof}
	Let $a \in \Abb$ satisfying $\iota(a)=b$ and let $\alpha_j(c,a)$ be the flattening of $\Delta_j$ given by the equation (\ref{eqn:flattening}) or (\ref{eqn:flattening2}). Let us rewrite the equations (\ref{eqn:flattening}) and (\ref{eqn:flattening2}) as follows (note that $a \in \Abb$ is a cocycle) : if $\Delta_j$ is positively oriented,
	\begin{equation} \label{eqn:flattening3}
	\left\{
	\begin{array}{rcl}
	\alpha_{j}^{0} &=& \textrm{log}\,c(l_1)  -\textrm{log}\,c(l_2) -a(s_{31})+a(s_{12}) -a(s_{23})\\[2pt]
	& &\quad + \textrm{log}\,c(l_4)-\textrm{log}\,c(l_5)-a(s_{34})+a(s_{45})-a(s_{53}), \\[4pt]
	\alpha_{j}^{1}&=&  \textrm{log}\,c(l_5) -\textrm{log}\,c(l_3) -a(s_{45})+a(s_{53})-a(s_{34})\\[2pt]
	& &\quad +\textrm{log}\,c(l_2) -\textrm{log}\,c(l_6) -a(s_{42})+a(s_{26})-a(s_{64}), \\[4pt]
	\alpha_{j}^{2} &=& \textrm{log}\,c(l_6)-\textrm{log}\,c(l_4) -a(s_{26})+a(s_{64})-a(s_{42})\\[2pt]
	&&\quad + \textrm{log}\,c(l_3)  -\textrm{log}\,c(l_1) -a(s_{23})+a(s_{31})-a(s_{12})
	\end{array}
	\right.
	\end{equation}   and if $\Delta_j$ is negatively oriented,
	\begin{equation} \label{eqn:flattening4}
	\left\{
	\begin{array}{rcl}
	\alpha_{j}^{0} &=& -\textrm{log}\,c(l_1)  +\textrm{log}\,c(l_2) +a(s_{31})-a(s_{12}) +a(s_{23})\\[2pt]
	& &\quad - \textrm{log}\,c(l_4)+\textrm{log}\,c(l_5)+a(s_{34})-a(s_{45})+a(s_{53}), \\[4pt]
	\alpha_{j}^{1} &=& -\textrm{log}\,c(l_6)+\textrm{log}\,c(l_4) +a(s_{26})-a(s_{64})+a(s_{42})\\[2pt]
	&&\quad - \textrm{log}\,c(l_3)  +\textrm{log}\,c(l_1) +a(s_{23})-a(s_{31})+a(s_{12}), \\[4pt]
	\alpha_{j}^{2} &=&  -\textrm{log}\,c(l_5) +\textrm{log}\,c(l_3) +a(s_{45})-a(s_{53})+a(s_{34})\\[2pt]
	& &\quad -\textrm{log}\,c(l_2) +\textrm{log}\,c(l_6) +a(s_{42})-a(s_{26})+a(s_{64})
	\end{array}
	\right.
	\end{equation}  for Figure \ref{fig:crossratio}. Note that each log-parameter in the equations (\ref{eqn:flattening3}) and (\ref{eqn:flattening4}) consists of ten terms, where the first five terms lie on a single face of $\Delta_j$ and the other five terms also lie on another face of $\Delta_j$. \\
	
	\noindent\textit{Claim 1.} The sum of log-parameters around each edge of $\Tcal$ is zero.
	\begin{proof}[Proof of Claim 1] Let us consider the log-parameters around an edge $l_0$ of $\Tcal$. We denote edges around $l_0$ by $l_1,l_2,\cdots,l_{2m-1},l_{2m}$ as in Figure \ref{fig:edgecocy} and  denote the short-edge joining from $l_i$ to $l_j$ by $s_{ij}$.
	\begin{figure}[!h]
		\centering
		\scalebox{1}{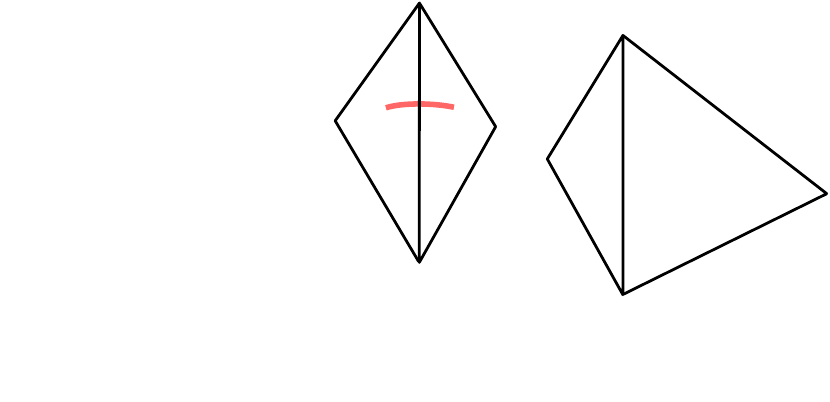} 
		\caption{Log-parameters around an edge $l_0$}
		\label{fig:edgecocy}
	\end{figure}
  Then the sum of log-parameters around $l_0$ is given by
  \allowdisplaybreaks
	\begin{talign*}
			& - \textrm{log}\, c(l_1)+\textrm{log}\, c(l_2)  - a(s_{02})+ a(s_{21}) - a(s_{10}) \\
			&+\textrm{log}\, c(l_3) - \textrm{log}\, c(l_4) - a(s_{03}) + a(s_{34})  - a(s_{40})\\
			& - \textrm{log}\, c(l_3)+\textrm{log}\, c(l_4)  - a(s_{04})+ a(s_{43}) - a(s_{30}) \\
			&+\textrm{log}\, c(l_5) - \textrm{log}\, c(l_6) - a(s_{05}) + a(s_{56})  - a(s_{60})\\
			& \cdots \\
			& - \textrm{log}\, c(l_{2m-1})+\textrm{log}\, c(l_{2m})  - a(s_{0(2m)})+ a(s_{(2m)(2m-1)}) - a(s_{(2m-1)0}) \\
			&+\textrm{log}\, c(l_1) - \textrm{log}\, c(l_2) - a(s_{01}) + a(s_{12})  - a(s_{20})
	\end{talign*} and is canceled out to zero, since $a(s_{ij})=-a(s_{ji})$.
	\end{proof}
	\noindent\textit{Claim 2.}  The sum of log-parameters along a normal path $\gamma$ in the neighborhood of an ideal vertex $v_j$ of $\Tcal$, corresponding to $\Sigma_j$, is $2b_j(\gamma)$. 
	
	\begin{proof}[Proof of Claim 2]	
	 The proof of \cite[Theorem 6.5]{zickert2009volume} exactly tells us that the sum of $\textrm{log}\,c$-terms along $\gamma$ is canceled out to zero. Therefore we may consider the sum of $a$-terms only. 
	 
	 As $\gamma$ crosses a face, it picks up three $a$-terms as it enters to the face and also picks up another three $a$-terms as it departs the face. More precisely, suppose $\gamma$ crosses a face whose edge are denoted by $l_1,l_2$, and $l_3$ as in Figure \ref{fig:normal}. 
	 	\begin{figure}[!h]
	 	\centering
	 	\scalebox{1}{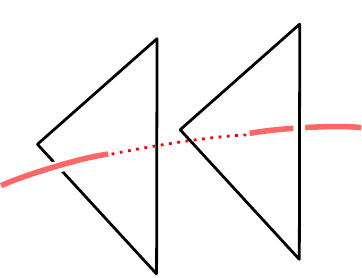} 
	 	\caption{A normal path crossing a face	}
	 	\label{fig:normal}
	 \end{figure}
	 As $\gamma$ enters to the face, it may pass either $l_1$ or $l_2$. From the equations (\ref{eqn:flattening3}) and $(\ref{eqn:flattening4})$, one can check that it picks up $a(s_{31})+a(s_{32})+a(s_{12})$ if $\gamma$ passes $l_1$; $a(s_{31})+a(s_{32})+a(s_{21})$ if $\gamma$ passes $l_2$. Similarly, as $\gamma$ departs the face, it picks up  $a(s_{13})+a(s_{23})+a(s_{21})$ if $\gamma$ passes $l_1$; $a(s_{13})+a(s_{23})+a(s_{12})$ if $\gamma$ passes $l_2$. Summing up the cases, we have $2a(s_{12})$ if $\gamma$ passes $l_1$ and $l_2$ in order; $2a(s_{21})$ if $\gamma$ passes $l_2$ and $l_1$ in order; zero, otherwise. Therefore, the sum of $a$-terms along $\gamma$ results in $2b_j(\gamma)$. See also Figure \ref{fig:normalcocy}. Recall that $b_j :\pi_1(\Sigma_j)\rightarrow \Cbb$ is the induced homomorphism from $a \in \Abb$.
	\begin{figure}[!h]
	\centering
	\scalebox{1}{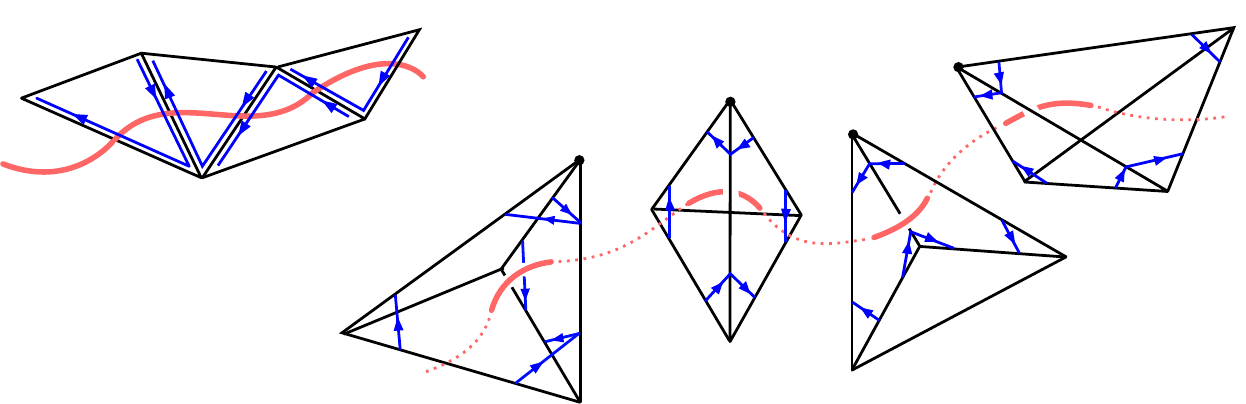} 
	\caption{Log-parameters along a normal path $\gamma$}
	\label{fig:normalcocy}
	\end{figure}
	\end{proof}
	Claims $1$ and $2$ tell us that if we choose $b \in \Bbb $ as in the equation (\ref{eqn:b}), then the flattenings $\alpha_j(c,a)$ satisfy the edge, cusp, filling conditions in Theorem \ref{thm:neumann}. Finally, the theorem follows form \cite[Lemma 11.3]{neumann2004extended}, which says that if the flattenings $\alpha_j(c,a)$ satisfy the conditions of Theorem \ref{thm:neumann} except the parity condition, then the equation (\ref{eqn:mainthm}) holds in modulo $\frac{1}{2}\pi^2\Zbb$.	 
\end{proof}

\begin{rmk}\label{rmk:parity} As in \cite{neumann2004extended} (see also \cite[Remark 6.7]{zickert2009volume}), parity along normal curves can be viewed as an element of $\textrm{Ker} ( H^1(N;\Zbb/2) \rightarrow H^1(\partial N;\Zbb/2 ))$. Therefore, if $N$ is a link exterior in the $3$-sphere, then we have the trivial kernel and Theorem \ref{thm:main1} holds also in modulo $\pi^2 \Zbb$.
\end{rmk}
%
\begin{exam} \label{ex:figeig2} Let us continue Example \ref{ex:figeig} of the figure-eight knot complement. Assigning  vertex-orderings of $\Delta_1$ and $\Delta_2$ as in Figure \ref{fig:figure_eight}, we have $\epsilon_1=1$ and $\epsilon_2=-1$. To consider $\kappa=(r,s)$-Dehn filling on the knot complement, we need a pair $(M,L)$ satisfying $M^r L^s = 1$ and the equation (\ref{eqn:Apoly}), the A-polynomial of the knot. Among all the possibilities, we choose one that maximizes the volume in order to find the geometric one (see \cite{thurston1979geometry,francaviglia2004hyperbolic}). Using Mathematica, for instance, we choose $(M,L)$ as follows.
	\begin{center}
		\begin{tabular}{c | c | c }
			$\kappa$ & $(M,\,L)$ & $(u,v)$\\[2pt] \hline
			$(1,5)$ & $(0.840595 + 0.007451\sqrt{-1},\ -0.838678 - 0.607067 \sqrt{-1})$ & $(4,0)$\\[2pt]
			$(2,5)$ & $(0.841492 + 0.014849\sqrt{-1},\ -0.871207 - 0.623622 \sqrt{-1})$ & $(2,0)$\\[2pt]
			$(3,5)$ & $(0.842985 + 0.022140\sqrt{-1},\ -0.906286 - 0.636885\sqrt{-1})$ & $(-2,2)$ \\[2pt]
			$(4,5)$ & $(0.845070 + 0.029264\sqrt{-1},\ -0.721385 - 0.494189\sqrt{-1})$ & $(1,0)$
		\end{tabular}
	\end{center} 
	For each given pair $(M,L)$ one can check that ${P}_\sigma(\Tcal)$ consists of a single element, say $c:\{l_1,l_2\}\rightarrow \Cbb$ with $c(l_2)=1$, up to the diagonal action. 
	
	We then need $b\in\Bbb$ satisfying $b(\mu^r \lambda^s)=0$, or  equivalently $(u,v)\in\Zbb^2$ satisfying $$r\,(\textrm{log}\, M+ u \pi i) +s\,(\textrm{log}\, L+ v \pi i)=0.$$  Recall that $b(\mu)=\textrm{log} \, M + u \pi i$ and $b(\lambda)=\textrm{log} \, L + v \pi i$. One can check that such $(u,v)$ is given as in the above table.  We also choose $a \in \Abb$ satisfying $\iota(a)=b$ as follows: $a(s_4)=a(s_7)=a(s_{10})=0$, $a(s_2)=a(s_5)=a(s_8)=a(s_{11})=b(\mu)$, $a(s_6)=a(s_9)=a(s_{12})=-b(\mu)$, $a(s_3)=-b(\lambda)+b(\mu)$, and $a(s_1)=b(\lambda)-2b(\mu)$. (Compare the definition of $a$ with that of $\sigma$ in Example \ref{ex:figeig}.)
	
	Let $z_1$ be the cross-ratio parameter of $\Delta_1$ at the edge $\overline{12}$ and $z_2$ be the cross-ratio. parameter of $\Delta_2$ at the edge $\overline{03}$. From Proposition \ref{eqn:crr} and the equations (\ref{eqn:flattening}) and (\ref{eqn:flattening2}), the flattening $\alpha_1(c,a)=(z_1;p_1,q_1)$ of $\Delta_1$ is given by
	\begin{equation*}
	\left\{
	\begin{array}{rcl}
	z_1 &=& \frac{L M^4 c(l_1)^2}{c(l_2)^2}\\[4pt]
	p_1&=& \frac{1}{\pi i} \left[ b(\lambda)+4b(\mu)+2\textrm{log}\,c(l_1)-2\textrm{log}\,c(l_2)-\textrm{log}\,z_1 \right] \\[4pt]
	q_1 &=&\frac{1}{\pi i} \left[-b(\lambda)-2b(\mu)-\textrm{log}\,c(l_1)+\textrm{log}\,c(l_2)+\textrm{log}\,(1-z_1) \right]
	\end{array}
	\right.
	\end{equation*} and the flattening $\alpha_2(c,a)=(z_2;p_2,q_2)$ of $\Delta_2$ is given by
	\begin{equation*}
	\left\{
	\begin{array}{rcl}
	z_2 &=& \frac{1}{L} \frac{c(l_2)^2}{c(l_1)^2}\\[4pt]
	p_2 &=& \frac{1}{\pi i} \left[ -b(\lambda)+2\textrm{log}\,c(l_2)-2\textrm{log}\,c(l_1)-\textrm{log}\,z_2 \right]\\[4pt]
	q_2 &=& \frac{1}{\pi i} \left[b(\lambda)+\textrm{log}\,c(l_1)-\textrm{log}\,c(l_2)+\textrm{log}\,(1-z_2) \right].
	\end{array}
	\right.
	\end{equation*} 
	Finally, $i$ times the complex volumes are given by $\Psi(c,b)=R(z_1;p_1,q_1)-R(z_2;p_2,q_2)$ as follows. These complex volumes coincide with the one given by Snappy in modulo $\pi^2\Zbb$ (see Remark \ref{rmk:parity}).
	\begin{center}
		\begin{tabular}{c | c  }
			$\kappa$ & $\Psi(c,b)$\\[2pt] \hline
			$(1,5)$ & $1.967879974 + 1.918602377 i$ \\[2pt]
			$(2,5)$ & $5.909776683 + 1.919520361 i$\\[2pt]
			$(3,5)$ & $3.930060763 + 1.921026911 i$\\[2pt]
			$(4,5)$ & $7.872366052 + 1.923087332 i$
		\end{tabular}
	\end{center} 
	
\end{exam}

\bibliographystyle{abbrv}
\bibliography{biblog}
\end{document}